\title{Double integrals on a weighted projective plane and the Hilbert modular functions for $\mathbb{Q}(\sqrt{5})$}
\author{Atsuhira Nagano}
\documentclass{article}
\usepackage{mathrsfs,bm,graphics,amsfonts,amsthm,amssymb,amsmath,amscd,booktabs}
\usepackage[pdflatex]{graphicx}
\pagestyle{plain}
\setlength{\oddsidemargin}{0 pt}
\setlength{\evensidemargin}{0 pt}
\setlength{\headsep}{0pt}
\setlength{\headheight}{0pt}
\setlength{\topmargin}{0pt}
\setlength{\baselineskip}{ 0pt}
\setlength{\textwidth}{160mm}
\setlength{\textheight}{240mm}

\def\bigzerou{\smash{\lower1.7ex\hbox{\b 0}}}

\newtheorem{thm}{Theorem}[section]
\newtheorem{defn}{Definition}[section]
\newtheorem{lem}{Lemma}[section]
\newtheorem{prop}{Proposition}[section]
\newtheorem{rem}{Remark}[section]

\newtheorem{cor}{Corollary}[section]

\def\comment#1{{ }}
\makeatletter
  
      \@addtoreset{equation}{section}
        
      \makeatletter

\begin{document}

\maketitle      

\begin{abstract}
The aim of this paper is
to give an explicit extension of the classical elliptic integrals to the Hilbert modular case for $\mathbb{Q}(\sqrt{5})$.
We study a family of  Kummer surfaces corresponding to the Humbert surface of invariant $5$ with two complex parameters.
Our Kummer surface is given by a double covering of the weighted projective space $\mathbb{P}(1:1:2)$
branched along a parabola and a quintic curve.
The period mapping for our family 
 is given by  double integrals of an algebraic function  on chambers coming from an arrangement of a parabola and a quintic curve in $\mathbb{C}^2$.
\end{abstract}




\footnote[0]{Keywords:  Elliptic integrals ; Hilbert modular functions ; Kummer surfaces  }
\footnote[0]{Mathematics Subject Classification 2010:  11F46, 14J28}
\footnote[0]{Running head: Double integrals and Hilbert modular  functions}
\setlength{\baselineskip}{14 pt}

\section*{Introduction}

The aim of this paper is to give a canonical extension of the classical elliptic integrals to the Hilbert modular case for $\mathbb{Q}(\sqrt{5})$.

The arrangement of $4$ points on the projective line $\mathbb{P}^1(\mathbb{C})$ is deeply related to the elliptic modular functions for the principal congruence subgroup $\Gamma(2)$.
The double covering of  $\mathbb{P}^1(\mathbb{C})$ branched  at $4$ points gives an elliptic curve.
The coordinate of the configuration space of  $4$ branch points on $\mathbb{P}^1(\mathbb{C})$
gives a modular function for $\Gamma(2)$
via the period mapping of the family of the corresponding elliptic curves.
 
 One of the most successful  extensions of the above classical story  to several variables is given by K. Matsumoto, T. Sasaki and M. Yoshida \cite{MSY}.
They showed an interesting relation 
between the arrangement of $6$ lines on the projective plane $\mathbb{P}^2(\mathbb{C})$
and the modular functions on a $4$ dimensional bounded symmetric space of type $I$ 
via the period mapping of the family of $K3$ surfaces  coming from the arrangement of $6$ lines.

We shall give another natural extension of the classical elliptic integrals to a case of several variables.
The Hilbert modular functions for  real quadratic fields  are very popular among modular functions of several variables.
However, to the best of the author's knowledge,
to obtain  simple and geometric extensions of the classical elliptic integrals to the Hilbert modular cases is a highly non-trivial problem. 
Although the Hilbert modular functions with level $2$ structure  can be obtained from the moduli of hyperelliptic curves of genus $2$,
they are characterized by a complicated modular equations (see Remark \ref{HumbertRemark}).

In this paper,  we focus on the Hilbert modular functions for $\mathbb{Q}(\sqrt{5})$.
Since the real quadratic field $\mathbb{Q}(\sqrt{5})$ gives the smallest discriminant, 
several researchers (for example, K. B. Gundlach \cite{Gundlach}, F. Hirzebruch \cite{Hirzebruch}, R. M\"uller \cite{Muller}) studied this case in detail.
We shall give a simple and geometric interpretation of the Hilbert modular functions in this case.
We consider the double integrals of the algebraic function $F$ in (\ref{alg.F})
of $2$ variables on chambers surrounded by the parabola $P$ in (\ref{P}) and the quintic curve $Q$ in (\ref{Q}) with the $(2,5)$-cusp.
These double integrals are equal to the period integrals of the Kummer surface $K(X,Y)$ in (\ref{K(X,Y)}). 
The equation   (\ref{K(X,Y)}) gives a double covering of the weighted projective plane $\mathbb{P}(1:1:2)$
branched along $P$ and $Q$
and the complex parameters $(X,Y)$ determine the arrangement of the branch loci.
The parameters  $(X,Y)$ are regarded as a pair of the Hilbert modular functions for $\mathbb{Q}(\sqrt{5})$
 via the explicit double integrals (see  Remark \ref{StoK} and Theorem \ref{DoubleIntegralThm}). 
Our results  are  coherent  with the story of the classical elliptic integrals (see Table 1).
The results in this paper are used
in the  paper \cite{NaganoShiga}.

 \begin{table}[h]\label{w-Table}
\center
\begin{tabular}{ccc}
\toprule
&Classical Story &  Result of This Paper \\
\midrule 
Base Space & $\mathbb{P}^1(\mathbb{C})$ & $\mathbb{P}(1:1:2)$ \\
Branch Loci & $4$ points & $P$ and $Q$\\
Variety & Elliptic curve & Kummer surface $K(X,Y)$ \\
Arrangement  & Elliptic modular function for $\Gamma(2)$ & Hilbert modular functions   for $\mathbb{Q}(\sqrt{5})$ \\
\bottomrule
\end{tabular}
\caption{The classical elliptic integrals and the result of this paper.} 
\end{table}

The author conjectures that 
we can similarly obtain simple and geometric interpretations of other Hilbert modular functions also,
using suitable weighted projective planes.
Our results might give a first step of such an approach to Hilbert modular functions.

\section{The Kummer surface $K(X,Y)$ and the Hilbert modular functions for $\mathbb{Q}(\sqrt{5})$}

We consider the period mapping for the family $\mathcal{K}=\{K(X,Y)\}$ of surfaces  where
\begin{eqnarray}\label{K(X,Y)}
K(X,Y):  v^2=(u^2-2y^5)(u-(5y^2-10X y+Y))            
\end{eqnarray}
for $(X,Y)\not=(0,0)$.
The equation (\ref{K(X,Y)}) gives a double covering of the $(y,u)$-space 
branched along
the parabola 
\begin{align}\label{P}
u=5 y^2 -10 X y +Y
\end{align}
and the quintic curve 
\begin{align}\label{Q}
u^2 = 2 y^5
\end{align}
with the $(2,5)$-cusp
$(y,u)=(0,0)$.
The parameters $(X,Y)$ define the arrangement of the divisors $P$ and $Q$. 
In this section, we see the properties of  the family $\mathcal{K}$.

\subsection{The Hilbert modular functions for $\mathbb{Q}(\sqrt{5})$ and the $K3$ surface $S(X,Y)$}

In this subsection, we survey the results of  \cite{Nagano}.

Let $\mathcal{O}$ be the ring of integers in the real quadratic field $\mathbb{Q}(\sqrt{5})$.
Set $\mathbb{H}=\{z\in \mathbb{C}| {\rm Im}( z) >0\}.$ The Hilbert modular group $PSL(2,\mathcal{O})$ acts on 
$\mathbb{H}\times\mathbb{H}$ by 
\begin{eqnarray*}
\begin{pmatrix}
\alpha &\beta \\
\gamma &\delta 
\end{pmatrix}
:
(z _1, z_2)\mapsto 
\Big(\frac{\alpha z_1 +\beta}{\gamma z_1 +\delta} ,\frac{\alpha ' z_2+\beta'}{\gamma' z_2+\delta' }\Big),
\end{eqnarray*}
for 
$
g = \begin{pmatrix}\alpha&\beta\\ \gamma&\delta\end{pmatrix} \in PSL(2,\mathcal{O}),
$
where $'$ means the conjugate in $\mathbb{Q}(\sqrt{5})$.
We consider the involution
$
\tau:(z_1,z_2)\mapsto(z_2,z_1)
$
also.

\begin{defn}
If a holomorphic function $g$ on $\mathbb{H}\times\mathbb{H}$
satisfies the transformation law
$$
g\Big(\frac{\alpha z_1+\beta}{\gamma z_1+\delta},\frac{\alpha' z_2+\beta'}{\gamma' z_2+\delta'} \Big)= (\gamma  z_1+\delta)^k (\gamma'z_2+\delta')^k g(z_1,z_2)
$$ 
for any $\displaystyle \begin{pmatrix} \alpha&\beta\\ \gamma&\delta \end{pmatrix}\in PSL(2,\mathcal{O})$,
we call g a Hilbert modular form of weight $k$ for $\mathbb{Q}(\sqrt{5})$.
If $g(z_2,z_1)=g(z_1,z_2)$,  $g$ is called a symmetric modular form.

If a meromorphic function $f$ on $\mathbb{H}\times \mathbb{H}$ satisfies
$$
f\Big(\frac{\alpha z_1+\beta}{\gamma z_1+\delta},\frac{\alpha'z_2+\beta'}{\gamma'z_2+\delta'}\Big)= f(z_1,z_2)
$$
for any $\displaystyle \begin{pmatrix} \alpha&\beta\\ \gamma&\delta\end{pmatrix}\in PSL(2,\mathcal{O})$,
we call $f$  a Hilbert modular function for $\mathbb{Q}(\sqrt{5})$.
\end{defn}

\begin{rem}\label{RemarkHilb}
Hirzebruch \cite{Hirzebruch}
showed that the symmetric Hilbert modular surface $\overline{(\mathbb{H}\times \mathbb{H})/\langle PSL(2,\mathcal{O}),\tau\rangle}$
is isomorphic to the weighted projective plane
$\mathbb{P}(1:3:5)=\{(\mathfrak{A}:\mathfrak{B}:\mathfrak{C})\}$.
The point $(\mathfrak{A}:\mathfrak{B}:\mathfrak{C})=(1:0:0)$ gives the cusp $(\sqrt{-1}\infty,\sqrt{-1}\infty)$ of the modular surface.
Letting 
\begin{eqnarray}\label{XYABC}
\displaystyle X=\frac{\mathfrak{B}}{\mathfrak{A}^3},\quad\quad
Y= \displaystyle \frac{\mathfrak{C}}{\mathfrak{A}^5},
\end{eqnarray}
the pair $(X,Y)$ defines a system of  affine coordinates of $\{\mathfrak{A}\not=0\}$ of $\mathbb{P}(1:3:5)$.
\end{rem}

\begin{rem}
M\"uller \cite{Muller}
gave the  Hilbert modular forms  
$g_2$ ($s_6, s_{10},s_{15}$, resp.) of weight $2$ ($6,10,15$, resp.).
They generate the ring of Hilbert modular forms for $\mathbb{Q}(\sqrt{5})$.
\end{rem}

A $K3$ surface $X$ is a simply connected compact complex surface with $K_X=0$.
The homology group $H_2(X,\mathbb{Z})$ has the unimodular lattice structure.
Let ${\rm NS}(X)$, the N\'eron-Severi lattice of $X$, denote the sublattice in $H_2(X, \mathbb{Z})$ generated by the divisors on $X$.
The orthogonal complement ${\rm Tr}(X)$ of ${\rm NS}(X)$ in 
$H_2(X, \mathbb{Z})$ is called the transcendental lattice of $X$.  

We set the family $\mathcal{F}=\{S(\mathfrak{A}:\mathfrak{B}:\mathfrak{C})|(\mathfrak{A}:\mathfrak{B}:\mathfrak{C})\in\mathbb{P}(1:3:5)-\{(1:0:0)\}\}$ of  $K3$ surfaces with an elliptic fibration given by the affine equation
\begin{eqnarray}\label{mother}
S(\mathfrak{A}:\mathfrak{B}:\mathfrak{C}):z_0^2 =x_0^3 - 4 y_0^2(4y_0 -5\mathfrak{A}) x_0^2 + 20 \mathfrak{B} y_0^3 x_0+\mathfrak{C} y_0^4.
\end{eqnarray}

 For a generic point $(\mathfrak{A}:\mathfrak{B}:\mathfrak{C})\in\mathbb{P}(1:3:5)$, the intersection matrix of the N\'eron-Severi lattice  ${\rm NS}(S(\mathfrak{A}:\mathfrak{B}:\mathfrak{C}))$ is given by 
 $
 E_8(-1)\oplus E_8(-1) \oplus \begin{pmatrix} 2&1\\ 1&-2 \end{pmatrix} 
 $
 (see  \cite{Nagano}).
Set $\mathcal{D}=\{\xi\in\mathbb{P}^3(\mathbb{C})| 
\xi A {}^t\xi =0, \xi A {}^t\overline{\xi}>0\}$, where $A=U\oplus \begin{pmatrix} 2&1\\1&-2\end{pmatrix}$ gives the transcendental lattice of $S(\mathfrak{A}:\mathfrak{B}:\mathfrak{C})$.
Here, U is the parabolic lattice of rank $2$.
Note that  $\mathcal{D}$ is composed of $2$ connected components $\mathcal{D}_+$ and $\mathcal{D}_-$. 
We  let $(1:1:-\sqrt{-1}:0)\in \mathcal{D}_+$.
In  \cite{Nagano},
we had  the multivalued period mapping $\mathbb{P}(1:3:5)-\{(1:0:0)\}\rightarrow \mathcal{D}_+$ for $\mathcal{F}$  given by
\begin{align}\label{PERIODK3}
\Phi: (\mathfrak{A}:\mathfrak{B}:\mathfrak{C}) \mapsto \Big(\int_{\Gamma_1} \omega :\int_{\Gamma_2} \omega: \int_{\Gamma_3} \omega: \int_{\Gamma_4}\omega \Big),
\end{align}
where $\omega $ is the holomorphic $2$-form up to a constant factor and $\Gamma_1,\cdots\Gamma_4$ are   $2$-cycles on $S(\mathfrak{A}:\mathfrak{B}:\mathfrak{C})$.

\begin{rem} \label{DualRemark}
Let $\{\check{\Gamma_1},\cdots,\check{\Gamma_4}\}$ be a basis of the transcendental  lattice $A$. 
We can take $2$-cycles $\Gamma_1,\cdots,\Gamma_4$   such that they satisfy $(\Gamma_j\cdot \check{\Gamma_k})=\delta_{j,k} $ $(j,k=1,\cdots 4)$. 
These $2$-cycles $\Gamma_1,\cdots,\Gamma_4$  give the period mapping (\ref{PERIODK3}).
\end{rem}

Note that we have a biholomorphic mapping $j:\mathbb{H}\times\mathbb{H}\rightarrow  \mathcal{D}_+ $.
The multivalued mapping  $j^{-1}\circ \Phi$  on $\{\mathfrak{A}\not=0\}$
is given by
\begin{align}\label{DEVE}
(X,Y)\mapsto (z_1,z_2)=\Bigg(\displaystyle -\frac{ \displaystyle \int_{\Gamma_3}\omega + \frac{1-\sqrt{5}}{2}\int_{\Gamma_4}\omega}{ \displaystyle \int_{\Gamma_2} \omega},-\frac{ \displaystyle \int_{\Gamma_3}\omega +\frac{1+\sqrt{5}}{2}\int_{\Gamma_4} \omega}{ \displaystyle \int_{\Gamma_2}\omega}\Bigg).
\end{align}

\begin{thm}(\cite{Nagano})\label{HilbertRem}
The multivalued period mapping (\ref{DEVE}) gives a developing map of the Hilbert modular orbifold $\overline{(\mathbb{H}\times\mathbb{H})/\langle PSL(2,\mathcal{O}),\tau \rangle}$
with the branch divisor
$$
Y (-1728X^5 + 64(5X^2-Y)^2 + 720X^3Y -80XY^2 +Y^3 )= 0.
$$
The inverse of    (\ref{DEVE}) gives a pair $(X(z_1,z_2),Y(z_1,z_2))$ of symmetric Hilbert modular functions for $\mathbb{Q}(\sqrt{5})$.
\end{thm}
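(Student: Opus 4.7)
The plan is to derive the statement from the lattice-theoretic structure of the family $\mathcal{F}$ already described, together with a Torelli-type argument and a direct computation of the discriminant locus. The first step is to pin down the period domain: since the transcendental lattice of a generic member of $\mathcal{F}$ is $A=U\oplus\begin{pmatrix}2&1\\1&-2\end{pmatrix}$, which has signature $(2,2)$ and discriminant group governed by $\mathbb{Q}(\sqrt{5})$, the component $\mathcal{D}_+$ of the type IV domain attached to $A$ is $2$-dimensional and admits an explicit biholomorphism $j:\mathbb{H}\times\mathbb{H}\to\mathcal{D}_+$. I would begin by writing down this biholomorphism in coordinates compatible with the basis $\check\Gamma_1,\ldots,\check\Gamma_4$ of $A$, recognising that the two factors of $\mathbb{H}$ correspond to the two embeddings of $\mathbb{Q}(\sqrt{5})$ into $\mathbb{R}$. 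Formula (\ref{DEVE}) is then nothing but the composition $j^{-1}\circ\Phi$ written out with the golden ratio $(1\pm\sqrt{5})/2$ appearing as the eigenvalues of the action of $\sqrt{5}$ on $A$.

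Next, I would identify the monodromy group. Since $\mathcal{F}$ is a family of $K3$ surfaces over $\mathbb{P}(1:3:5)\setminus\{(1{:}0{:}0)\}$ with transcendental lattice $A$, the monodromy representation factors through $O^+(A)$. Using that $O^+(A)$ is commensurable with (and in fact, after restricting to the group preserving the Hermitian structure induced by $\mathbb{Q}(\sqrt{5})$, can be identified with) $\langle PSL(2,\mathcal{O}),\tau\rangle$, one shows the period map descends to a multivalued map into $(\mathbb{H}\times\mathbb{H})/\langle PSL(2,\mathcal{O}),\tau\rangle$. To upgrade this to a developing map (hence to conclude that the inverse gives \emph{symmetric} Hilbert modular functions), I would verify: (i) surjectivity via dimension count ($\dim\mathbb{P}(1:3:5)=2=\dim\mathcal{D}_+$), (ii) generic injectivity via a Torelli theorem for $K3$ surfaces applied to the family $\mathcal{F}$, and (iii) that the involution $\tau$ corresponds to an isomorphism of $K3$ surfaces realising the same point in $\mathbb{P}(1:3:5)$, so that the period map actually descends through the $\tau$-quotient.

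The branch divisor calculation is a direct but delicate computation. The branch locus of the period map corresponds to members of $\mathcal{F}$ whose Picard number jumps, equivalently to the discriminant locus of the Weierstrass-type equation (\ref{mother}). Treating (\ref{mother}) as an elliptic fibration in $x_0$ over the base parametrised by $y_0$, one computes the cubic discriminant in $x_0$ and extracts the factor measuring degeneration of the elliptic fibration. A straightforward (though tedious) elimination, combined with the substitution (\ref{XYABC}), gives the quintic factor $-1728X^5+64(5X^2-Y)^2+720X^3Y-80XY^2+Y^3$, while the factor $Y$ accounts for the additional degeneration at $\mathfrak{C}=0$. Matching this with Hirzebruch's description of the Humbert divisor on the Hilbert modular surface for $\mathbb{Q}(\sqrt{5})$ completes the identification.

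The main obstacle, in my view, is step two: pinning down the monodromy group precisely as $\langle PSL(2,\mathcal{O}),\tau\rangle$ rather than a subgroup of finite index. This requires either an explicit generator-by-generator monodromy computation around each irreducible component of the branch divisor (using Picard--Lefschetz on the vanishing cycles), or an appeal to the fact that the coarse moduli of the $K3$ surfaces in $\mathcal{F}$ is $\mathbb{P}(1:3:5)$ itself, matching Hirzebruch's theorem cited in Remark \ref{RemarkHilb}. The discriminant computation and the construction of the domain are comparatively routine once the lattice data is in hand.
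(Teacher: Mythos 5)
You should first be aware that the paper you are working against contains no proof of Theorem \ref{HilbertRem} at all: the statement is imported wholesale from the reference \cite{Nagano}, where it is established by a largely computational route --- identifying the parameters $(\mathfrak{A}:\mathfrak{B}:\mathfrak{C})$ with M\"uller's modular forms and expressing the inverse of the period mapping (\ref{DEVE}) explicitly through theta constants, on top of Hirzebruch's identification of $\overline{(\mathbb{H}\times\mathbb{H})/\langle PSL(2,\mathcal{O}),\tau\rangle}$ with $\mathbb{P}(1:3:5)$ (Remark \ref{RemarkHilb}). Your proposal is therefore a genuinely different, abstract lattice-polarized/Torelli-type argument, and it must stand on its own; as written, it has two gaps that are not merely technical.

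First, surjectivity of the period map onto $\mathcal{D}_+$ cannot be obtained ``via dimension count.'' Equal dimensions together with local Torelli only make the period map a local biholomorphism away from the discriminant, i.e.\ an open map; to conclude that the image is all of $\mathcal{D}_+$ you need it to be closed as well, which requires a properness or boundary-extension argument (for instance, extending the period map over the Baily--Borel compactification and using that the excluded parameter $(1:0:0)$ goes to the cusp $(\sqrt{-1}\infty,\sqrt{-1}\infty)$). Second --- and you flag this yourself as the main obstacle --- the monodromy group must be shown to be exactly $\langle PSL(2,\mathcal{O}),\tau\rangle$ and not a proper finite-index subgroup; but of the two remedies you offer, the appeal to ``the fact that the coarse moduli of the $K3$ surfaces in $\mathcal{F}$ is $\mathbb{P}(1:3:5)$ itself, matching Hirzebruch's theorem'' is circular: Hirzebruch's theorem identifies $\mathbb{P}(1:3:5)$ with the symmetric Hilbert modular surface, and the claim that the parameter space of $\mathcal{F}$ realizes this moduli space \emph{compatibly with the period map} is precisely the content of the theorem being proved. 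That leaves only the explicit Picard--Lefschetz computation, which is the hard part and cannot be deferred. A smaller but real issue of the same kind affects the branch divisor: computing the discriminant locus of (\ref{mother}) under the substitution (\ref{XYABC}) locates where the N\'eron--Severi lattice jumps, but to conclude that $Y\cdot(-1728X^5+64(5X^2-Y)^2+720X^3Y-80XY^2+Y^3)=0$ is the \emph{orbifold branch divisor} of the developing map you must additionally check that the local projective monodromy along each component is a reflection of order two in $\langle PSL(2,\mathcal{O}),\tau\rangle$; Picard-number jumping alone does not rule out the period map remaining locally injective there. Filling these three points (closedness of the image, generator-by-generator monodromy, reflection behaviour along the discriminant) would make your outline into a correct proof, but each one is a substantive missing step rather than routine bookkeeping.
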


\begin{rem}
The icosahedral group is deeply related to the Hilbert modular functions for $\mathbb{Q}(\sqrt{5})$ (see \cite{Hirzebruch} or \cite{KobaNaru}).
Since the divisor 
\begin{eqnarray}\label{KleinIcosa}
-1728X^5 + 64(5X^2-Y)^2 + 720X^3Y -80XY^2 +Y^3=0
\end{eqnarray}
is derived from  Klein's icosahedral invariants,
this relation is called Klein's icosahedral relation.
\end{rem}

\begin{rem}
The inverse $(X(z_1,z_2),Y(z_1,z_2))$ of (\ref{DEVE})
has an explicit  expression in terms of the M\"uller's modular forms $g_2,s_6,s_{10}$   (see \cite{Nagano}).
\end{rem}

\subsection{The Kummer surface for the Humbert surface of invariant $5$}

In this subsection, we recall the properties of the Humbert surface of invariant $5$.

Let $\mathfrak{S}_2$ be the Siegel upper half plane of degree $2$.
The symplectic group $Sp(4,\mathbb{Z})$ acts on $\mathfrak{S}_2$.
The quotient space $\mathfrak{S}_2/Sp(4,\mathbb{Z})$ gives the moduli space of  principally polarized Abelian surfaces.  
Take
$\Omega=\begin{pmatrix}\sigma_1 & \sigma_2 \\ \sigma_2 & \sigma_3 \end{pmatrix}\in \mathfrak{S}_2$.
Let $L_\Omega $ be the lattice generated by the columns of the matrix $(\Omega,I_2)$.
The complex torus $Z_\Omega=\mathbb{C}/L_\Omega$ of $2$-dimension gives a principally polarized Abelian surface. 
We note that $Z_\Omega$ corresponds to the Jacobian variety  of a hyperelliptic curve of genus $2$.

Let $T $ be the involution of a $2$-dimensional complex torus $Z$ induced by $(z_1,z_2)\mapsto (-z_1,-z_2)$ on   the universal covering $\mathbb{C}^2$.
The minimal resolution ${\rm Kum}(Z)=\overline{Z/\langle id, T\rangle}$ is called the Kummer surface.
${\rm Kum}(Z)$ is a $K3$ surface.
Note that $Z$ is an Abelian surface if and  only if ${\rm Kum}(Z)$ is an algebraic $K3$ surface.

\begin{rem}
Let $\Omega\in\mathfrak{S}_2 $ and $Z_\Omega$ be the corresponding principally polarized Abelian  surface.
The Kummer surface  ${\rm Kum}(Z_\Omega)$ can be given by 
the double covering of $\mathbb{P}^2(\mathbb{C})=\{(\zeta_0:\zeta_1:\zeta_2)\}$
whose branch divisor is given by $6$ lines 
$
\zeta_2=0, \zeta_2 + 2\zeta_1 +\zeta_0=0,  \zeta_0=0
$  and
$  \zeta_2 + 2 \lambda_j \zeta_1 +\lambda_j^2 \zeta_0=0,$ $(j\in \{1,2,3\})$
with three complex parameters $\lambda_1,\lambda_2$ and $\lambda_3$.
In this paper, this Kummer surface is denoted by $K_H(\lambda_1,\lambda_2,\lambda_3)$.
\end{rem}

An element $\Omega=\begin{pmatrix} \sigma_1 & \sigma_2 \\ \sigma_2 & \sigma_3 \end{pmatrix} \in \mathfrak{S}_2$
is said to have a singular relation  with invariant $\Delta$ if there exist relatively prime integers $a,b,c,d,e\in\mathbb{Z}$ such that $a\sigma_1+b \sigma_2+c\sigma_3+d(\sigma_2^2-\sigma_1\sigma_3)+e=0$ and $\Delta=b^2-4 ac -4 de$.
Set
$
\mathcal{N}_5=\{\Omega\in\mathfrak{S}_2| \sigma  \text{ has a singular relation with invariant } \Delta\}.
$
Let $p$ be the  canonical projection $\mathfrak{S}_2 \rightarrow \mathfrak{S}_2 /Sp(4,\mathbb{Z})$.
Then, the space $\mathcal{H}_5=p(\mathcal{N}_5)$, called  the Humbert surface of invariant $5$, gives the moduli space of principally polarized Abelian surfaces $A$ such that  $\mathcal{O}\subset{\rm End }(A) $.

\begin{rem}\label{HumbertRemark}
Humbert \cite{Humbert} showed that $\Omega$ has s singular relation with $\Delta =5$ 
 if and only if 
\begin{align}\label{HumbertModular}
&\notag 4(\lambda_1^2 \lambda_3 - \lambda_2^2 +\lambda_3^2 (1-\lambda_1) + \lambda_2 ^2\lambda_3)(\lambda_1^2\lambda_2\lambda_3 -\lambda_1 \lambda_2^2 \lambda_3)\\
&=(\lambda_1^2(\lambda_2+1)\lambda_3 -\lambda_2^2 (\lambda_1 +\lambda_3) +(1-\lambda_1)\lambda_2\lambda_3^2 +\lambda_1(\lambda_2 -\lambda_3)  )^2
\end{align}
holds (see also \cite{HashimotoMurabayashi} Theorem 2.9).
This relation is called Humbert's modular equation for $\Delta=5$.
Let  $\mathcal{Q}:\mathcal{M}_{2,2}\rightarrow \mathfrak{S}_2/Sp(4,\mathbb{Z})$ be the natural projection, where $\mathcal{M}_{2,2}$ is the moduli space of genus two curves with level $2$ structure. The equation {\rm (\ref{HumbertModular})} defines a component of the inverse image $\mathcal{Q}^{-1}(\mathcal{H}_5)$.

This modular equation is studied  in detail by several researchers (for example, Hashimoto and Murabayashi \cite{HashimotoMurabayashi}).
However,
since this equation (\ref{HumbertModular}) is complicated, 
to the best of the author's knowledge, 
to study the moduli properties of the family $\{K_H(\lambda_1,\lambda_2,\lambda_3)\}$ corresponding to $\mathcal{H}_5$ is not easy.
\end{rem}

\subsection{The Shioda-Inose structure}

Let $X$ be an algebraic $K3$ surface.
Let $\omega$ be the unique holomorphic $2$-form on $X$ up to a constant factor.
If an involution $\iota: X\rightarrow X$ satisfies
$\iota^* \omega =\omega$, we call $\iota$ a symplectic  involution. 
Set $G=\langle \iota ,{\rm id} \rangle\subset {\rm Aut}(X)$.
Set $\tilde{Y}=X/G$.
Letting $Y\rightarrow \tilde{Y}$ be the minimal resolution, 
 $Y$ is a $K3$ surface.
We have  the rational quotient mapping $\chi:X \dashrightarrow Y$.

\begin{defn}
We say that a $K3$ surface $X$ admits a Shioda-Inose structure if there exists a symplectic involution 
$\iota\in{\rm Aut}(X) $ with the rational quotient mapping $\chi: X\dashrightarrow Y$ such that $Y$ is a Kummer surface and $\chi_*$ induces a Hodge isometry ${\rm Tr}(X)(2)\simeq{\rm Tr } (Y)$.
\end{defn}

\begin{thm} {\rm (Morrison \cite{Morrison})} \label{ShiodaInoseThm}
The $K3$ surface  $X$ admits a Shioda-Inose structure if and only if there is an embedding $E_8(-1)\oplus E_8(-1) \hookrightarrow {\rm NS}(X)$.
A symplectic involution $\iota$ exchanging the two copies of $E_8(-1)$ 
induces a Shioda-Inose structure.
\end{thm}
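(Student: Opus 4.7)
My plan is to prove both implications using Nikulin's lattice-theoretic tools for finite group actions on K3 surfaces together with the Torelli theorem.

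For the ``if'' direction, suppose $E_8(-1) \oplus E_8(-1) \hookrightarrow {\rm NS}(X)$ is given. Recall the K3 lattice $\Lambda_{K3} = U^{\oplus 3} \oplus E_8(-1)^{\oplus 2}$. Nikulin's uniqueness result for primitive embeddings of even lattices into even unimodular lattices forces the embedding $E_8(-1)^{\oplus 2} \hookrightarrow \Lambda_{K3}$ to agree, up to isometry of $\Lambda_{K3}$, with the standard one. The involution $\sigma$ of $\Lambda_{K3}$ that swaps the two $E_8(-1)$ summands and fixes $U^{\oplus 3}$ is then a Hodge isometry (it fixes ${\rm Tr}(X) \subset U^{\oplus 3}$ and hence the period), preserves positivity, and lifts by the strong Torelli theorem---after composing with Weyl group reflections to preserve the K\"ahler cone---to an involution $\iota \in {\rm Aut}(X)$. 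Since $\iota^\ast$ fixes $H^{2,0}(X)$, the involution is symplectic and by construction exchanges the two copies of $E_8(-1)$.

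For the ``only if'' direction, start with a symplectic involution $\iota$ on $X$ and the rational quotient $\chi: X\dashrightarrow Y$ with $Y$ Kummer. Nikulin's classification of symplectic involutions tells us that $\iota$ has exactly $8$ isolated fixed points and that the coinvariant lattice $H^2(X,\mathbb{Z})_-$ is isometric to $E_8(-2)$; the orthogonal complement of this $E_8(-2)$ in ${\rm NS}(X)$ supplies one copy of $E_8(-1)$, coming from the $8$ exceptional divisors after desingularizing $X/\langle \iota\rangle$ and pulling back. The hypothesis ${\rm Tr}(X)(2) \simeq {\rm Tr}(Y)$ combined with the Kummer structure on $Y$ then provides, via the push-pull identity $\chi_\ast\chi^\ast = 2\cdot {\rm id}$, a second copy of $E_8(-1)$ in ${\rm NS}(X)$ orthogonal to the first.

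The main obstacle, and where most of the work resides, is showing that the quotient surface $Y$ constructed in the ``if'' direction is genuinely a Kummer surface. The way I would attack this is to examine the sublattice of ${\rm NS}(Y)$ generated by the images of the two $E_8(-1)$ summands together with the $8$ new exceptional divisors above the fixed points of $\iota$, and check that it contains a primitively embedded copy of the Kummer lattice of rank $16$. Combined with Nikulin's intrinsic criterion---that a K3 surface $Y$ is Kummer if and only if ${\rm Tr}(Y) \simeq T(2)$ for some even lattice $T$, which here follows from the push-pull computation applied to $\chi$---this identifies $Y$ with ${\rm Kum}(A)$ for some Abelian surface $A$ whose transcendental lattice matches ${\rm Tr}(X)$, and the same push-pull computation realizes the required Hodge isometry ${\rm Tr}(X)(2) \simeq {\rm Tr}(Y)$, completing the Shioda-Inose structure.
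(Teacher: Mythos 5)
First, a point of reference: the paper offers no proof of this statement at all --- it is quoted from Morrison \cite{Morrison} --- so your attempt can only be compared with Morrison's original argument. Your ``if'' direction does follow that argument in outline (unimodularity makes the embedding $E_8(-1)^{\oplus 2}\hookrightarrow \Lambda_{K3}$ automatically primitive, Nikulin's uniqueness of primitive embeddings puts $\mathrm{Tr}(X)$ inside $U^{\oplus 3}$, the swap involution is a Hodge isometry, Torelli produces a symplectic involution, and one identifies the quotient as Kummer), but two steps are stated incorrectly. Composing with Weyl reflections to fix the K\"ahler cone destroys involutivity; one must instead conjugate by the Weyl element (equivalently, find an invariant class in the positive cone off all $(-2)$-walls), and this is exactly where one needs that the anti-invariant lattice $E_8(-2)$ contains no $(-2)$-vectors --- a fact you never invoke. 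More seriously, the ``intrinsic criterion'' you cite is false as stated: a K3 surface $Y$ with $\mathrm{Tr}(Y)\simeq T(2)$ for some even lattice $T$ need not be Kummer. The correct criterion (Nikulin; Morrison) is a primitive embedding of the rank-$16$ Kummer lattice into $\mathrm{NS}(Y)$, equivalently a primitive embedding $\mathrm{Tr}(Y)\hookrightarrow U(2)^{\oplus 3}$. For instance $T=\langle 2\rangle^{\oplus 2}\oplus\langle -2\rangle^{\oplus 3}$ has discriminant group $(\mathbb{Z}/2)^5$, which is not cyclic, so $T$ admits no primitive embedding into $U^{\oplus 3}$; a K3 surface with transcendental lattice $T(2)$ satisfies your criterion but is not Kummer. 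In the case at hand the stronger condition does hold, precisely because $\mathrm{Tr}(X)$ sits primitively in $U^{\oplus 3}$, but that is the point that has to be made.

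The ``only if'' direction is genuinely broken. Pullback and pushforward along the degree-$2$ map $\chi$ scale the intersection form by $2$, so classes on $Y$ can never supply copies of $E_8(-1)$ inside $\mathrm{NS}(X)$: the image under $\chi^*$ of an $E_8(-1)\subset\mathrm{NS}(Y)$ is a copy of $E_8(-2)$, and the $8$ exceptional curves of the resolution live on $Y$ (they generate the Nikulin lattice, which is not $E_8(-1)$ in any case), not on $X$. Likewise, the orthogonal complement of the coinvariant $E_8(-2)$ inside $\mathrm{NS}(X)$ is the invariant part of $\mathrm{NS}(X)$, which contains the \emph{diagonal} copy of $E_8(-2)$, not $E_8(-1)$; for the generic member of this paper's family it is $E_8(-2)\oplus\bigl(\begin{smallmatrix}2&1\\1&-2\end{smallmatrix}\bigr)$. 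Morrison's actual argument runs through the abelian surface: $Y$ Kummer means $Y=\mathrm{Kum}(A)$, so $\mathrm{Tr}(Y)\simeq\mathrm{Tr}(A)(2)$; the Shioda--Inose hypothesis $\mathrm{Tr}(X)(2)\simeq\mathrm{Tr}(Y)$ then yields a Hodge isometry $\mathrm{Tr}(X)\simeq\mathrm{Tr}(A)$, hence a primitive embedding $\mathrm{Tr}(X)\hookrightarrow H^2(A,\mathbb{Z})\simeq U^{\oplus 3}$; and by uniqueness of primitive embeddings into $\Lambda_{K3}=U^{\oplus 3}\oplus E_8(-1)^{\oplus 2}$, the complement $\mathrm{NS}(X)=\mathrm{Tr}(X)^{\perp}$ must contain $(U^{\oplus 3})^{\perp}=E_8(-1)^{\oplus 2}$. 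Without this detour through $H^2(A,\mathbb{Z})$ you have no mechanism for producing unimodular $E_8(-1)$'s upstairs.
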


\subsection{Kummer surface $K(X,Y)$}

Due to Theorem \ref{ShiodaInoseThm}, 
the $K3$ surface $S(\mathfrak{A}:\mathfrak{B}:\mathfrak{C})$ for $(\mathfrak{A}:\mathfrak{B}:\mathfrak{C})\not=(1:0:0)$ admits a Shioda-Inose structure.
Therefore, there exists the Kummer surface $K(\mathfrak{A}:\mathfrak{B}:\mathfrak{C})$ and a symplectic involution 
$\iota $\ of $S(\mathfrak{A}:\mathfrak{B}:\mathfrak{C})$ such that the corresponding rational quotient mapping 
$\chi:S(\mathfrak{A}:\mathfrak{B}:\mathfrak{C}) \dashrightarrow K(\mathfrak{A}:\mathfrak{B}:\mathfrak{C})$ induces a Hodge isometry 
${\rm Tr}(S(\mathfrak{A}:\mathfrak{B}:\mathfrak{C}))(2) \simeq {\rm Tr}(K(\mathfrak{A}:\mathfrak{B}:\mathfrak{C}))$.

We shall obtain an  explicit defining equation of  $K(\mathfrak{A}:\mathfrak{B}:\mathfrak{C})$ by realizing the above symplectic involution $\iota$.
To find such  an involution, we need a special elliptic fibration  on $S(\mathfrak{A}:\mathfrak{B}:\mathfrak{C})$ given by the following lemma.

\begin{lem}\label{2-torsionLemma}
The defining equation of $S(\mathfrak{A}:\mathfrak{B}:\mathfrak{C})$ in {\rm (\ref{mother})} is birationally equivalent  to
\begin{eqnarray}\label{2-torsionK3}
z_1^2 = x_1 (x_1^2 +(20 \mathfrak{A} y_1^2 -20 \mathfrak{B} y_1 +\mathfrak{C})x_1 +16 y_1^5).
\end{eqnarray}
\end{lem}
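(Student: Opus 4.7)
The plan is to exhibit an explicit birational change of variables from \eqref{mother} to \eqref{2-torsionK3} and to verify the identity by direct substitution.

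The key structural observation is that \eqref{2-torsionK3} has $x_1=0$ as a visible $2$-torsion section---its right-hand side factors as $x_1(x_1^2+Ax_1+B)$ with $A=20\mathfrak{A}y_1^2-20\mathfrak{B}y_1+\mathfrak{C}$ and $B=16y_1^5$---while \eqref{mother} has no such $2$-torsion on a generic fibre of the $y_0$-fibration. Hence the two Weierstrass equations cannot present the \emph{same} elliptic fibration on the K3 surface $S(\mathfrak{A}:\mathfrak{B}:\mathfrak{C})$; rather, they must exhibit different elliptic pencils on the same surface, and the desired birational equivalence must mix the roles of $x_0$ and $y_0$.

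The natural way to produce a second pencil is to reread \eqref{mother} as a family of curves over $\mathbb{P}^1_{x_0}$. Regrouping the right-hand side by powers of $y_0$ gives
\[
z_0^2=\mathfrak{C}\,y_0^4+(20\mathfrak{B}x_0-16x_0^2)\,y_0^3+20\mathfrak{A}x_0^2\,y_0^2+x_0^3,
\]
a genus-$1$ curve of the shape $z^2=(\text{quartic in }y_0)$ over $\mathbb{C}(x_0)$. My proof plan is then: (i) take the Jacobian of this quartic model via the classical formulas for the $I,J$ invariants and put it into Weierstrass form over the new base $x_0$; (ii) after an overall rescaling, set $y_1$ equal to $x_0$ up to a constant and read off the rational expressions for $x_1$ and $z_1$ in $(x_0,y_0,z_0)$; (iii) verify that the resulting Weierstrass equation reduces exactly to the claimed right-hand side $x_1^3+(20\mathfrak{A}y_1^2-20\mathfrak{B}y_1+\mathfrak{C})x_1^2+16y_1^5 x_1$. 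As a sanity check, the $2$-torsion section $x_1=0$ of \eqref{2-torsionK3} should correspond to the ``point at infinity'' of the quartic (the locus $y_0=\infty$), which is the standard origin of $2$-torsion in the Jacobian of a quartic $z^2=Q(y_0)$.

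The main obstacle is bookkeeping. The Jacobian of a quartic comes with explicit but messy scaling constants, and tracking these through the change of base from $\mathbb{C}(y_0)$ to $\mathbb{C}(x_0)$ so as to land precisely on the clean normalization $B=16y_1^5$ and $A=20\mathfrak{A}y_1^2-20\mathfrak{B}y_1+\mathfrak{C}$ requires care. Once the substitution is in hand, however, the lemma reduces to a polynomial identity in $\mathbb{C}[\mathfrak{A},\mathfrak{B},\mathfrak{C}][x_0,y_0,z_0]$, verifiable by a routine, if lengthy, expansion (or a computer algebra check).
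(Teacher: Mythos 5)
Your opening observation is correct and is in fact the heart of the matter: equation (\ref{2-torsionK3}) carries the $2$-torsion section $x_1=0$, the $y_0$-fibration of (\ref{mother}) carries no $2$-torsion, so the birational map must exchange elliptic pencils. The gap is in your choice of the second pencil: it is \emph{not} the $x_0$-pencil, and steps (ii)--(iii) of your plan will fail when carried out. Concretely, for your quartic $z_0^2=\mathfrak{C}y_0^4+(20\mathfrak{B}x_0-16x_0^2)y_0^3+20\mathfrak{A}x_0^2y_0^2+x_0^3$ over $\mathbb{C}(x_0)$ the classical invariants are $I=x_0^3(12\mathfrak{C}+400\mathfrak{A}^2x_0)$ and $J=x_0^5\cdot(\text{quadratic in }x_0)$, so the discriminant $\tfrac{1}{27}(4I^3-J^2)$ is $x_0^9$ times a quintic not vanishing at $x_0=0$ (for generic parameters). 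Thus the Jacobian fibration over $x_0$ has an additive fibre of type $III^*$ at $x_0=0$ (with $j(0)=1728$), a $II^*$ at $x_0=\infty$ and five $I_1$'s; its $j$-map has degree $5$, and its Mordell--Weil group is torsion free (a $2$-torsion section would need $0=4+2(P\cdot O)-\mathrm{contr}$, impossible since the maximal correction term $3/2$ coming from $III^*$ is less than $4$). The fibration of (\ref{2-torsionK3}) over $y_1$ has none of these features: it has an $I_{10}$ fibre at $y_1=0$ (a pole of $j$ of order $10$), its $j$-map has degree $15$, and its Mordell--Weil group contains $\mathbb{Z}/2$. Since an identification $y_1=cx_0$ (or any M\"obius transformation of the base) preserves fibre types, the degree of $j$, and torsion, the Jacobian of your quartic can never be brought to the form (\ref{2-torsionK3}) this way; the polynomial identity you hope to verify in step (iii) is simply false. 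Your sanity check is also incorrect: for $z^2=Q(y_0)$ the difference of the two points at infinity is $2$-torsion only when $Q$ minus the square of its polynomial square root has degree $\leq 0$, which fails here (the relevant linear coefficient is nonzero), consistently with the absence of torsion.

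The pencil that works is the pencil of lines through the point $(x_0,y_0)=(0,0)$, the singular point of the affine model: set $y_1=-x_0/y_0$. Substituting $x_0=-y_1y_0$ into (\ref{mother}) gives $z_0^2=y_0^3\bigl(-16y_1^2y_0^2+(20\mathfrak{A}y_1^2-20\mathfrak{B}y_1+\mathfrak{C})y_0-y_1^3\bigr)$ over $\mathbb{C}(y_1)$, and reducing this genus-one curve to Weierstrass form (for instance via $x_1=-y_1^3/y_0$, $z_1=y_1^3z_0/y_0^3$) lands exactly on (\ref{2-torsionK3}); now the $I_{10}$, $III^*$ and $2$-torsion data all match. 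This is precisely what the paper's one-line proof encodes: it exhibits the substitution $x_0=\frac{x_1}{16y_1}$, $y_0=-\frac{x_1}{16y_1^2}$, $z_0=\frac{x_1z_1}{256y_1^4}$ (note that $-x_0/y_0=y_1$) and verifies the identity by direct expansion; the transformation above and the paper's differ only by the van Geemen--Sarti involution $x_1\mapsto 16y_1^5/x_1$ that appears later in the paper.
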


\begin{proof}
Perform the birational transformation
$$
x_0=\frac{x_1}{16 y_1}, y_0=-\frac{x_1}{16 y_1^2}, z_0=\frac{x_1 z_1}{256 y_1^4}
$$
 to (\ref{mother}).
\end{proof}

\begin{figure}[h]
\center
\includegraphics[scale=1]{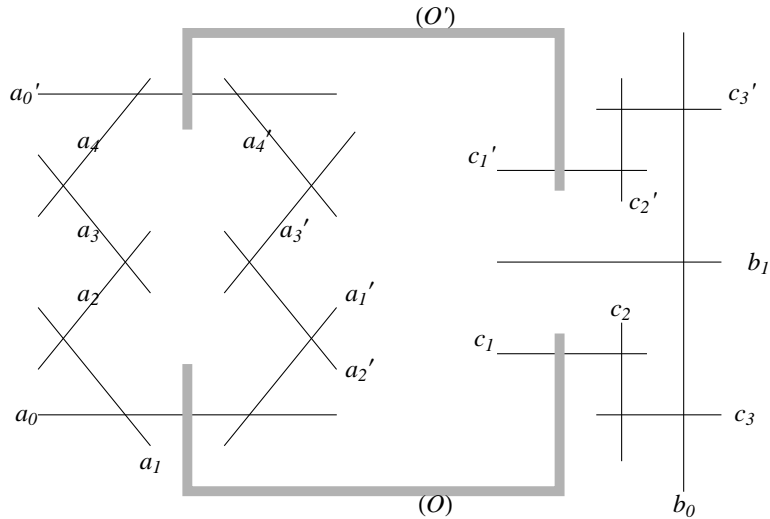}
\caption{The singular fibres given by (\ref{2-torsionK3}).}
\label{singularFig}
\end{figure}

The mapping $\pi_1:S(\mathfrak{A}:\mathfrak{B}:\mathfrak{C})\rightarrow \mathbb{P}^1(\mathbb{C})$  given by $(x_1,y_1,z_1)\mapsto y_1$ 
defines an elliptic fibration.
The fibre $\pi_1^{-1}(0)$ ($\pi_1^{-1}(\infty)$, resp.) is a singular fibre of $\pi_1$ of type $I_{10}$ ($III^*$, resp.). 
We set $\pi_1^{-1}(0) = a_0+a_1+\cdots+a_4+a_0'+a_1'+\cdots+a_4' $ and $\pi_1^{-1}(\infty)=b_0+b_1+c_1+c_2+c_3+c_1'+c_2'+c_3'$. 
Let $O$ be the zero of the Mordell-Weil group.
Let $O'$ be the section of $\pi_1$ given by $(x_1,y_1,z_1)=(0,y_1,0)$.
Note that $2O'=O$ (see Figure 1).

We have the involution $\iota$ of $S(\mathfrak{A}:\mathfrak{B}:\mathfrak{C})$ given  by
$$
 (x_1,y_1,z_1)\mapsto  (\frac{16 y_1^5}{x_1},y_1,\frac{-16 y_1^5 z_1}{x_1^2} ).
$$
This is a symplectic involution.
Note that $\iota$ is a van Geemen-Sarti involution 
for elliptic  surfaces  {\rm (see \cite{GeemenSarti})}.
Let $G=\langle id, \iota \rangle$.
Set
\begin{align}\label{u1v1}
\displaystyle u_1=x_1+\frac{16 y_1^5}{x_1} ,\quad\quad
\displaystyle v_1=\frac{x_1 ^2- 16 y_1^5}{z_1}.
\end{align}
They are $G$-invariants.
We can see that
$
(x_1,y_1,z_1)\mapsto (u_1,y_1,v_1)
$
defines a $2$ to $1$ mapping.

\begin{thm}
The defining equation of the Kummer surface $K(\mathfrak{A}:\mathfrak{B}:\mathfrak{C})$ is given by
\begin{eqnarray}\label{KummerABC}
 v^2=(u^2-2y^5)(u-(5\mathfrak{A}y^2-10\mathfrak{B}y+\mathfrak{C})).
\end{eqnarray}
For generic $(\mathfrak{A}:\mathfrak{B}:\mathfrak{C})\in \mathbb{P}(1:3:5)$, 
the intersection matrix of the transcendental lattice ${\rm Tr}(K(\mathfrak{A}:\mathfrak{B}:\mathfrak{C}))$
is given by
$$
A(2)= \begin{pmatrix} 0 & 2 \\ 2 &0 \end{pmatrix} \oplus \begin{pmatrix} 4 & 2 \\ 2 & -4 \end{pmatrix}. 
$$
\end{thm}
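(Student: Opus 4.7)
The proof naturally splits into two parts: deriving the explicit equation (\ref{KummerABC}) as a model of the Kummer surface, and computing its transcendental lattice.

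For the equation, the strategy is to write down the quotient of $S(\mathfrak{A}:\mathfrak{B}:\mathfrak{C})$ by $G = \langle \mathrm{id}, \iota\rangle$ using the $G$-invariants $u_1, y_1, v_1$ from (\ref{u1v1}) and then bring the resulting hypersurface into the target form by a birational change of coordinates. The key algebraic observation is that, writing $P(y_1) = 20\mathfrak{A} y_1^2 - 20\mathfrak{B} y_1 + \mathfrak{C}$, equation (\ref{2-torsionK3}) can be rewritten as
\begin{equation*}
z_1^2 \;=\; x_1\bigl(x_1^2 + 16 y_1^5\bigr) + P(y_1)\, x_1^2 \;=\; x_1^2 \bigl(u_1 + P(y_1)\bigr),
\end{equation*}
using $x_1 \cdot u_1 = x_1^2 + 16 y_1^5$. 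A parallel manipulation of the numerator gives $(x_1^2 - 16 y_1^5)^2 = (x_1 u_1 - 32 y_1^5)^2 = x_1^2(u_1^2 - 64 y_1^5)$. Dividing these two identities yields
\begin{equation*}
v_1^2 \;=\; \frac{(x_1^2 - 16 y_1^5)^2}{z_1^2} \;=\; \frac{u_1^2 - 64 y_1^5}{u_1 + P(y_1)},
\end{equation*}
so $S/G$ has affine equation $v_1^2(u_1 + P(y_1)) = u_1^2 - 64 y_1^5$. I would then clear the denominator by setting $w = v_1(u_1 + P(y_1))$ and obtain $w^2 = (u_1^2 - 64 y_1^5)(u_1 + P(y_1))$; the substitution $u_1 = -u$, $y_1 = y/2$, $w = i v$ normalizes the constants and turns $P(y_1)$ into $L(y) = 5\mathfrak{A} y^2 - 10 \mathfrak{B} y + \mathfrak{C}$, producing exactly (\ref{KummerABC}). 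Since this is a birational map between two singular models of the same surface $S/G$, their minimal resolutions agree, so (\ref{KummerABC}) is a defining equation of $K(\mathfrak{A}:\mathfrak{B}:\mathfrak{C})$.

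For the transcendental lattice, I would invoke the framework set up in Section 1.3: the $K3$ surface $S(\mathfrak{A}:\mathfrak{B}:\mathfrak{C})$ has $\mathrm{NS}(S)\supset E_8(-1)\oplus E_8(-1)$ for generic parameters, so Theorem \ref{ShiodaInoseThm} of Morrison applies and produces a Shioda--Inose structure via an involution exchanging the two $E_8(-1)$ summands. One must identify the involution $\iota$ above with such an involution; this is precisely the content of its description as a van Geemen--Sarti involution on the elliptic fibration $\pi_1$ whose singular fibres are of types $I_{10}$ and $III^*$, since the two copies of $E_8(-1)$ in $\mathrm{NS}(S)$ arise (after taking into account the section $O'$ with $2O' = O$) from these fibres and $\iota$ visibly swaps the two halves of $I_{10}$ along with the corresponding root sublattices. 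Once this identification is granted, the Shioda--Inose Hodge isometry gives
\begin{equation*}
\mathrm{Tr}(K(\mathfrak{A}:\mathfrak{B}:\mathfrak{C})) \;\cong\; \mathrm{Tr}(S(\mathfrak{A}:\mathfrak{B}:\mathfrak{C}))(2) \;\cong\; U(2)\oplus \begin{pmatrix} 4 & 2 \\ 2 & -4 \end{pmatrix},
\end{equation*}
and $U(2) = \begin{pmatrix} 0 & 2 \\ 2 & 0 \end{pmatrix}$ is the first summand in $A(2)$, as stated.

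The routine half is the algebra of the previous paragraph, which is just careful bookkeeping. The substantive obstacle is the lattice-theoretic step: one must check that the specific $\iota$ defined on $S(\mathfrak{A}:\mathfrak{B}:\mathfrak{C})$ genuinely swaps the two orthogonal $E_8(-1)$'s inside $\mathrm{NS}(S)$, rather than merely being some symplectic involution. This requires tracking the action of $\iota$ on the reducible components $a_0,\ldots,a_4,a_0',\ldots,a_4'$ of the $I_{10}$ fibre and on the $III^*$ fibre, and verifying the compatibility with the section $O'$ of order $2$; this geometric/lattice computation on the singular fibres of $\pi_1$ is where the real work of the theorem lies, and it is what justifies both the appellation \emph{Kummer surface} for the resulting quotient and the factor of $2$ in the transcendental lattice.
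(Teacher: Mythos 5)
Your proposal is correct and follows essentially the same route as the paper: the explicit $G$-invariant computation with $u_1, v_1$ and the birational substitution $u_1=-u$, $y_1=y/2$, $v_1=\sqrt{-1}v/(u-(5\mathfrak{A}y^2-10\mathfrak{B}y+\mathfrak{C}))$ to obtain (\ref{KummerABC}), combined with Morrison's Theorem \ref{ShiodaInoseThm} applied to $\iota$ interchanging the two copies of $E_8(-1)$ (which the paper likewise settles by direct observation of the fibre components, cf.\ Figure 2) to conclude ${\rm Tr}(K)\simeq {\rm Tr}(S)(2)=A(2)$. Your write-up actually supplies more of the algebraic bookkeeping than the paper does, and correctly isolates the $E_8$-swapping verification as the substantive step.
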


\begin{proof}
By direct observations, we can check that
 $\iota$ interchanges $2$ copies of  $E_8 (-1)$ in ${\rm NS}(S(\mathfrak{A}:\mathfrak{B}:\mathfrak{C}))$ (see Figure 2).
Therefore, due to Theorem \ref{ShiodaInoseThm}, the involution $\iota$ gives the Shioda-Inose structure on $S(\mathfrak{A}:\mathfrak{B}:\mathfrak{C})$.
\begin{figure}[h]
\label{E8fig}
\center
\includegraphics[scale=1]{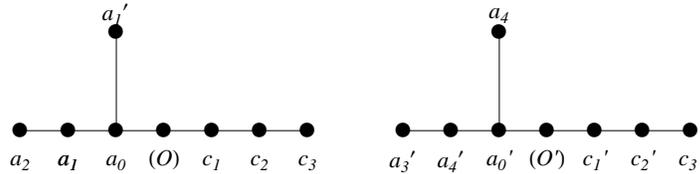}
\caption{$E_8(-1)$ lattices in ${\rm NS}(S(\mathfrak{A}:\mathfrak{B}:\mathfrak{C})).$}
\end{figure}

From (\ref{2-torsionK3}), (\ref{u1v1}),  and the birational transformation 
$$
u_1=-u,\quad v_1=\frac{\sqrt{-1}v}{u-(5\mathfrak{A}y^2-10 \mathfrak{B}y +\mathfrak{C})},\quad y_1=\frac{y}{2},
$$
we can check that
 the defining equation of $S((\mathfrak{A}:\mathfrak{B}:\mathfrak{C})/G$
is given by (\ref{KummerABC}).

We have the intersection matrix of ${\rm Tr}(K(\mathfrak{A}:\mathfrak{B}:\mathfrak{C}))$ since $\iota$ gives the Shioda-Inose structure.
\end{proof}

We have the family $\tilde{\mathcal{K}}=\{K(\mathfrak{A}:\mathfrak{B}:\mathfrak{C})\}$ of Kummer surfaces. 
The projection $(y,u,v)\mapsto (y,u)$   defines   the double covering $\mathcal{P}:K(\mathfrak{A}:\mathfrak{B}:\mathfrak{C})\rightarrow \mathbb{P}(1:1:2)=\{(\zeta_0:\zeta_1:\zeta_2)\}$, where $y=\frac{\zeta_1}{\zeta_0}$ and $u=\frac{\zeta_2}{\zeta_0^2}$ on $\{\zeta_0\not=0\}$.
Its branch divisor is given by $\tilde{P}\cup \tilde{Q}$, where
\begin{align}\label{PandQ}
\begin{cases}
& \tilde{P} \cap \{\zeta_0\not=0\}=\{(y,u) |  u= 5 \mathfrak{A} y^2 -10 \mathfrak{B} y + \mathfrak{C} \},\\
& \tilde{Q}\cap\{\zeta_0\not=0\}=\{ (y,u)|  u^2 =2y^5\}.
\end{cases}
\end{align}

\begin{rem}
The equation  (\ref{KummerABC}) gives an expression of the Kummer surface ${\rm Kum}(Z_\Omega)$  for $\Omega\in\mathcal{H}_5$.
This is different from the expression of $K_H(\lambda_1,\lambda_2,\lambda_3)$ in Remark \ref{HumbertRemark}.
Our expression has some advantages. 
For example,
 our parameter space has a simple compactification by adding the point $(\mathfrak{A}:\mathfrak{B}:\mathfrak{C})=(1:0:0)$.
 This point is equal to the cusp of the Hilbert modular surface $\overline{(\mathbb{H}\times \mathbb{H})/\langle PSL(2,\mathcal{O}),\tau \rangle}$ (see Remark \ref{RemarkHilb}).
\end{rem}

Let $\omega_K$ be the unique holomorphic $2$-form on $K(\mathfrak{A}:\mathfrak{B}:\mathfrak{C})$ up to a constant factor.
Set 
$
\chi_* (\Gamma_j) =\Delta_j
$
 for $j\in\{1,2,3,4\}$.
We have the period mapping for $\mathcal{K}$
given by
\begin{align}\label{KummerPeriod}
\Phi_K: (\mathfrak{A}:\mathfrak{B}:\mathfrak{C}) \mapsto \Big(\int_{\Delta_1} \omega_K :\int_{\Delta_2} \omega_K: \int_{\Delta_3} \omega_K: \int_{\Delta_4}\omega_K \Big)\in\mathcal{D}.
\end{align}

Since $\chi^*(\omega_K)=\omega$ and $\chi_* (\Gamma_j) =\Delta_j$,
we have clearly the following proposition.

\begin{prop}\label{equalLem}
It holds that
\begin{align*}
\Big( \int_{\Gamma_1}\omega:\cdots:\int_{\Gamma_4}\omega \Big) =\Big( \int_{\Delta_1}\omega_K:\cdots:\int_{\Delta_4}\omega_K \Big).
\end{align*}
\end{prop}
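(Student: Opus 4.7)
The plan is to reduce the statement to an application of the projection formula for the rational quotient map $\chi : S(\mathfrak{A}:\mathfrak{B}:\mathfrak{C}) \dashrightarrow K(\mathfrak{A}:\mathfrak{B}:\mathfrak{C})$, combined with the two facts recalled just before the proposition, namely $\chi^{\ast}(\omega_{K}) = \omega$ (up to a non-zero constant, since the holomorphic $2$-forms on a $K3$ surface are unique up to scalar) and $\chi_{\ast}(\Gamma_{j}) = \Delta_{j}$.

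First I would verify that the integrals in question are well defined. The rational map $\chi$ is induced by the quotient by the symplectic involution $\iota$ followed by the minimal resolution, so its indeterminacy locus has codimension at least $2$ on $S(\mathfrak{A}:\mathfrak{B}:\mathfrak{C})$. Hence, after a small perturbation inside its homology class, each $2$-cycle $\Gamma_{j}$ may be chosen to avoid the indeterminacy locus, and the pushforward $\chi_{\ast}(\Gamma_{j})$ is represented by a genuine $2$-cycle on $K(\mathfrak{A}:\mathfrak{B}:\mathfrak{C})$.

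Next I would apply the projection formula: for every $j$,
\begin{equation*}
\int_{\Delta_{j}} \omega_{K} \;=\; \int_{\chi_{\ast}(\Gamma_{j})} \omega_{K} \;=\; \int_{\Gamma_{j}} \chi^{\ast}(\omega_{K}) \;=\; c \int_{\Gamma_{j}} \omega ,
\end{equation*}
where $c$ is the (same) non-zero constant realising $\chi^{\ast}(\omega_{K}) = c\,\omega$. Since the constant $c$ is independent of $j$, it cancels when we pass to projective coordinates, yielding the required projective equality of the two period vectors.

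The only substantive point is that $\chi$ is rational and of degree $2$, which a priori might seem to spoil the pullback/pushforward matching. I expect this to be the main (and only) obstacle, and it is resolved precisely by observing that the indeterminacy of $\chi$ is supported on a finite set (the fixed locus of $\iota$ and the associated exceptional divisors in the resolution), so both $\chi^{\ast}$ on holomorphic $2$-forms and the projection formula extend across it; the constant $c$ is absorbed by the projective normalisation.
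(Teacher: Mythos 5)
Your proposal is correct and follows essentially the same route as the paper: the paper treats the proposition as immediate from $\chi^{*}(\omega_K)=\omega$ and $\chi_{*}(\Gamma_j)=\Delta_j$, which is exactly the adjunction (projection formula) argument you spell out. Your additional care about the indeterminacy locus of $\chi$ and the scalar ambiguity of the holomorphic $2$-form merely makes explicit what the paper leaves implicit, so there is no substantive difference.
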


\begin{rem}\label{StoK}
According to Theorem \ref{HilbertRem} and the above proposition,
the inverse of $j^{-1}\circ \Phi_K$ gives the pair $(X,Y)$ of Hilbert modular functions for $\mathbb{Q}(\sqrt{5})$ via the period mapping $\Phi_K$.
\end{rem}

We have the projection $\pi:K(\mathfrak{A}:\mathfrak{B}:\mathfrak{C})\rightarrow \mathbb{P}^1(\mathbb{C})$ given by $(u,y,v) \mapsto y$.
We have the singular fibre $\pi^{-1} (0)$ ($\pi^{-1} (\infty)$, resp.) of the elliptic surface $(K((\mathfrak{A}:\mathfrak{B}:\mathfrak{C}),\pi,\mathbb{P}^1(\mathbb{C}))$ of type $I_5$ ($III^*$, resp.) and  other five singular fibres $\pi^{-1}(s_1),\cdots, \pi^{-1}(s_5)$ of type $I_2$.

\begin{prop}\label{NSKummerProp}
The vector space
${\rm NS}(K(\mathfrak{A}:\mathfrak{B}:\mathfrak{C}))\otimes_\mathbb{Z} \mathbb{Q}$ is generated by the components of singular fibres,  the section  $O$ given by the zero of the Mordel-Weil group and a general fibre $F $ of $\pi$.
\end{prop}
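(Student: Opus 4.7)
The plan is to apply the Shioda--Tate formula to the elliptic fibration $\pi$ and match the result against the Picard number computed from the transcendental lattice given in the preceding theorem.

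First, I would note that the preceding theorem identifies ${\rm Tr}(K(\mathfrak{A}:\mathfrak{B}:\mathfrak{C}))$ with $A(2)$, which has rank $4$. Since $K(\mathfrak{A}:\mathfrak{B}:\mathfrak{C})$ is a $K3$ surface and therefore satisfies $b_2 = 22$, the Picard number must be $\rho = 22 - 4 = 18$.

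Second, I would read off the singular fibre data from the paragraph just above the proposition: one fibre of type $I_5$, one of type $III^*$ (which has $8$ components), and five of type $I_2$. Summing (number of components $-1$) over all singular fibres gives
$$
\sum_v (m_v - 1) = (5-1) + (8-1) + 5 \cdot (2-1) = 16.
$$
Adding the zero section $O$ and the general fibre $F$, the trivial lattice spanned by these classes has rank $2 + 16 = 18$. The Shioda--Tate formula
$$
\rho = 2 + \sum_v (m_v - 1) + {\rm rank}\, MW(\pi)
$$
then forces ${\rm rank}\, MW(\pi) = 0$, so the Mordell--Weil group of $\pi$ is torsion.

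Third, since torsion classes vanish under $\otimes_{\mathbb{Z}} \mathbb{Q}$, the trivial lattice already spans the whole of ${\rm NS}(K(\mathfrak{A}:\mathfrak{B}:\mathfrak{C})) \otimes_\mathbb{Z} \mathbb{Q}$, and this is exactly the claim. The only real point to verify is the list of singular fibre types used in the count; these follow from applying Kodaira's classification to the Weierstrass model derived from (\ref{KummerABC}) (the $I_5$ at $y=0$ and $III^*$ at $y=\infty$ being the Shioda--Inose translates of the $I_{10}$ and $III^*$ fibres of $\pi_1$ in Figure \ref{singularFig}, while the $I_2$ fibres arise at the five zeros of the discriminant in $y$). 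Once the fibre types are confirmed, the rest is the numerical identity above.
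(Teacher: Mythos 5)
Your proof is correct, but it certifies the key rank count by a different tool than the paper. Both arguments share the same skeleton: the preceding theorem gives ${\rm Tr}(K(\mathfrak{A}:\mathfrak{B}:\mathfrak{C}))\simeq A(2)$ of rank $4$, hence $\rho = 22-4 = 18$, and both take the fibre configuration ($I_5$ at $y=0$, $III^*$ at $y=\infty$, five $I_2$'s) as stated just before the proposition, so that the count $2+\sum_v(m_v-1)=2+16=18$ matches $\rho$. The difference is in how that count is turned into a spanning statement. You invoke the Shioda--Tate formula $\rho = 2+\sum_v(m_v-1)+{\rm rank}\,MW(\pi)$, deduce ${\rm rank}\,MW(\pi)=0$, and use the fact that torsion dies under $\otimes_{\mathbb{Z}}\mathbb{Q}$. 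The paper instead exhibits the $18$ explicit divisors $\Theta_{0,1},\dots,\Theta_{0,4}$, $\Theta_{s_1,1},\dots,\Theta_{s_5,1}$, $\Theta_{\infty,1},\dots,\Theta_{\infty,7}$, $O$, $F$ and verifies by a direct intersection-number computation that they span a sublattice of rank $18$; equality of dimensions then finishes the argument with no appeal to Mordell--Weil theory at all. Your route is cleaner and yields the bonus fact that $MW(\pi)$ is torsion; the paper's route is more elementary and self-contained, and it pins down an explicit $\mathbb{Q}$-basis of ${\rm NS}\otimes_{\mathbb{Z}}\mathbb{Q}$. One small remark: once you know the trivial lattice has rank $18=\rho$ (which is automatic from the standard structure of the trivial lattice of an elliptic surface with section), the conclusion already follows by dimension count inside ${\rm NS}\otimes_{\mathbb{Z}}\mathbb{Q}$, so the Shioda--Tate step, while perfectly valid, is slightly more machinery than strictly needed.
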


\begin{proof}
${\rm NS}(K(\mathfrak{A}:\mathfrak{B}:\mathfrak{C}))\otimes_\mathbb{Z} \mathbb{Q}$ is an $18$-dimensional vector space over $\mathbb{Q}$.
Set $\displaystyle\pi^{-1}(y)=\bigcup_{j=0,\cdots,r(y)} \Theta_{y,j}$, where $\Theta_{y,j}$ is a connected component and $\Theta_{y,0}\cap O\not=\phi$. 
By calculating the intersection numbers,
we can check that the $18$ divisors $\Theta_{0,1},\cdots,\Theta_{0,4},$ $\Theta_{s_1,1},\cdots,\Theta_{s_5,1},$ $\Theta_{\infty,1},\cdots,\Theta_{\infty,7}$, $O$ and $F$
generate a sublattice of ${\rm NS}(K(\mathfrak{A}:\mathfrak{B}:\mathfrak{C}))$  of rank $18$. 
Hence the claim follows.
\end{proof}

Considering the relation (\ref{XYABC}), 
we have the defining equation (\ref{K(X,Y)}) with $(X,Y)$ parameters.

\section{The double integrals of an algebraic function on chambers surrounded by a parabola and a quintic curve}

In this section, we  obtain an extension of the classical elliptic integrals.
We shall study a single-valued branch  $U_0\rightarrow\mathcal{D}_+$ of the multivalued period mapping $\Phi_K$ explicitly
where $U_0$ is an open set given by Figure 3 in $\mathbb{R}^2$.
By the analytic continuation of this single-valued branch, we have the multivalued period mapping $\Phi_K$ in (\ref{KummerPeriod}). 
The arrangement of $P$ in (\ref{P}) and $Q$ in (\ref{Q}) determines the chambers $R_1,R_2,R_3$ and $R_4$ in Figure 9.
Theorem  \ref{DoubleIntegralThm} gives an extension of the classical elliptic integrals to the Hilbert modular case for $\mathbb{Q}(\sqrt{5})$.

\subsection{The elliptic curve $E(y)$}

For $y>0$, set
$
\alpha(y) =y^2 \sqrt{2y},
\beta (y) = -y^2 \sqrt{2y}$ and
$p(y)=5y^2 -10X y + Y$
where $\sqrt{y}>0$.
Note that these $\alpha(y),\beta(y)$ and $p(y)$ are real valued analytic functions for $y\in\mathbb{R}_+$. 
Set
\begin{eqnarray}\label{E(y)}
E(y): v^2=(u-\alpha(y))(u-\beta(y))(u-p(y)),
\end{eqnarray}
for $y\in\mathbb{R}_+$.
Of course, $E(y)$ gives the  fibre for $y\in\mathbb{R}_+$ of the elliptic  surface $(K(X,Y),\pi,\mathbb{P}^1(\mathbb{C}))$.
The discriminant of the right hand side  of (\ref{E(y)}) for $u$ has five roots in $y$-plane.

Let $U_0$ be the domain in $\mathbb{R}^2=\{(X,Y)\}$ described in  Figure 3.
The curve in Figure 3 is Klein's icosahedral relation in (\ref{KleinIcosa}).
 \begin{figure}[h]
\center
\includegraphics[scale=.4,clip]{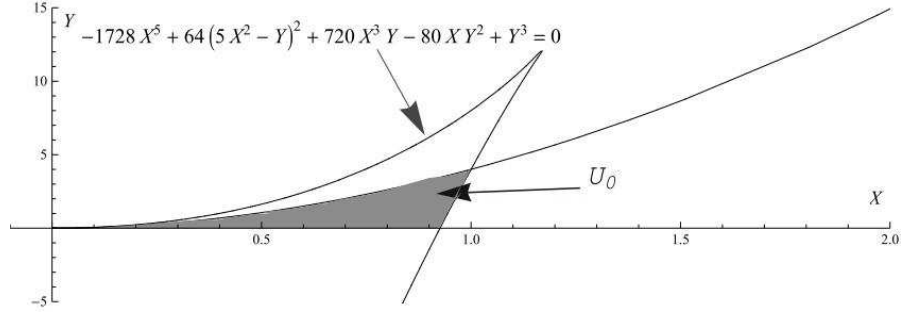}
\caption{The domain $U_0$ in $(X,Y)$-space $\mathbb{R}^2$.}
\end{figure}
If $\displaystyle (X,Y)\in U_0$,  
the five roots of the discriminant of the right hand side  of (\ref{E(y)}) for $u$ are in $\mathbb{R}_+(\subset y{\rm -space})$.
So, we  let $s_1=s_1(X,Y),s_2=s_2(X,Y),s_3=s_3(X,Y),s_4=s_4(X,Y)$ and $s_5=s_5(X,Y)$ be these five roots such that $0<s_1<s_2<s_3<s_4<s_5$.

For $(X,Y)\in U_0 $ and $s_{j-1}<y<s_j$ $(j=0,\cdots,6)$, we denote the right hand side of $E(y)$ by
$
(u-w_1 (y))(u-w_2 (y))(u-w_3 (y)),
$
where $w_1(y) <w_2(y) <w_3(y)$
(see Table 2 and Figure 4). 
\begin{table}[h]\label{w-Table}
\center
{\small
\begin{tabular}{ccccccc}
\toprule
&{\footnotesize $0<y<s_1$}&{\footnotesize$s_1<y<s_2$}&{\footnotesize $s_2<y<s_3$} &{\footnotesize$s_3<y<s_4$ }&{\footnotesize$s_4<y<s_5$ }&  {\footnotesize$s_5<y$} \\
\midrule 
$w_1(y)$ & $\beta(y)$& $\beta(y)$& $p(y)$ & $\beta(y)$& $\beta(y)$ & $\beta(y)$\\
$w_2(y)$ & $\alpha(y)$ & $p(y)$ & $ \beta(y)$& $p(y)$ & $\alpha(y)$ & $p(y)$\\
$w_3(y)$ & $p(y)$ & $\alpha(y)$&$\alpha(y)$ &$\alpha(y)$ &$p(y)$ &$\alpha(y)$\\
\bottomrule
\end{tabular}}
\caption{The correspondence between $\{w_1,w_2,w_3\}$ and $\{\alpha,\beta,p\}.$} 
\end{table}

\begin{figure}\label{abp}
\center
\includegraphics[scale=0.62]{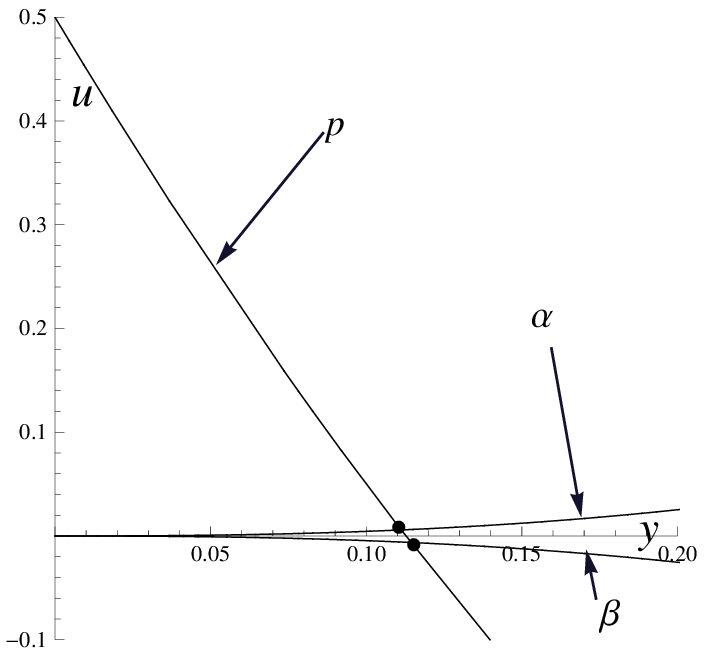}
\quad
\includegraphics[scale=0.6]{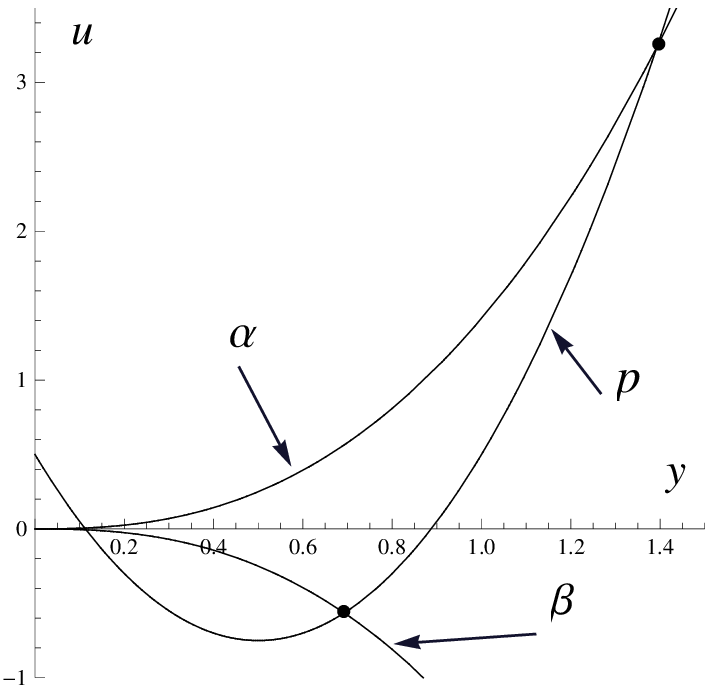}
\quad
\includegraphics[scale=0.53]{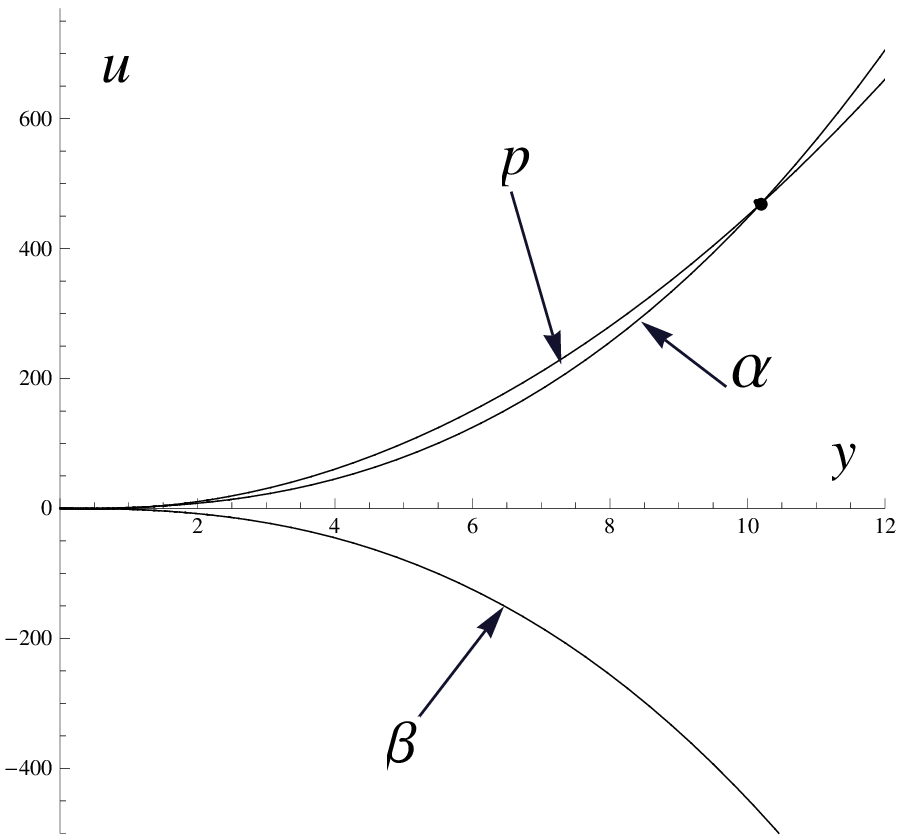}\\
{\small
$0<y<0.2$   \hspace{2cm} $0<y<1.5$ \hspace{2cm} $0<y<12$
}
\caption{The graph of $u=\alpha(y),\beta(y),p(y)$
}
\end{figure}

Since the points $\alpha(y),\beta(y)$ and $p(y)$ are real-valued for $y\in\mathbb{R}_+$, 
the function 
$$
F(y,u_+)=\sqrt{(u_+-\alpha(y))(u_+-\beta(y))(u_+-p(y))}
$$
is single-valued   on
 $\{(y,u_+)|y\in\mathbb{R}_+, {\rm Im}(u_+)>0\}$.
Hence, 
\begin{align}\label{alg.F}
F(y,u)=\lim_{t \rightarrow 0} F(y,u+\sqrt{-1} t) \in\mathbb{R}
\end{align}
is  single-valued 
for $s_{j-1}<y<s_j$ and $u\not\in\{ \alpha(y),\beta(y),p(y),\infty\}$ as Table 3. 
\begin{table}[h]
\center
{\small
\begin{tabular}{ccccc}
\toprule
  &$-\infty<u<w_1$ &  $w_1<u<w_2$ & $w_2<u<w_3$ & $ w_3<u<\infty$ \\  
  \midrule
$F(u,y)$ & $-\sqrt{-1}\mathbb{R}_+$& $-\mathbb{R}_+$& $\sqrt{-1}\mathbb{R}_+$ &$\mathbb{R}_+$ \\
\bottomrule
\end{tabular}
}
\caption{The values of $F(u,y) $.}
\end{table}

Take a base point $b\in (s_2,s_3)(\subset \mathbb{R})$.
We can take the basis $\{\gamma_1 , \gamma_2\}$ of the homology group $H_1(\pi^{-1}(b),\mathbb{Z})$ such that $(\gamma_1\cdot\gamma_2)=1$  and 
\begin{eqnarray*}
{\tiny
\int_{\gamma_1} \omega = 2\int _{\beta(b)}^{p(b)}\frac{du}{\sqrt{F(b,u)}},\quad
\int_{\gamma_2} \omega =2 \int _{\alpha(b)}^{\beta(b)}\frac{du}{\sqrt{F(b,u)}}.
}
\end{eqnarray*}

For $j\in\{0,1,2\}$ ($\in\{3,4,5\}$, resp.), 
we put $l_j= \{(s_j,-\sqrt{-1} t ) | t\geq 0\}$
 ($=\{(s_j,\sqrt{-1}t ) | t\geq0 \}$, resp.).
We call $l_j$ the cut line for $s_j$.
For $y\in\mathbb{C}-\{l_0,\cdots,l_5\}$, 
take an arc $\alpha_y$ which does not touch the cut lines $l_j$ ($j\in\{0,\cdots,5\}$) with the start (end, resp.) point $b$ ($y$, resp.).
 Let $u\mapsto a_y(u)$ $(0\leq u\leq 1)$ be the parametric representation of $\alpha_y$.
Take a $1$-cycle $\gamma$ on $E(b)$.
For $\gamma \in H_1(\pi^{-1}(b),\mathbb{Z})$,
we choose the $1$-cycle $\gamma_{\alpha_y}(u)$ on $\pi^{-1}(a_y(u))$ which depends continuously on $u$ with $\gamma_{\alpha_y}(0)=\gamma$.
If $\alpha'_y$ is   homotopic to $\alpha_y$ in $\mathbb{C}-\{l_0\cup\cdots\cup l_5\}$, we have $\gamma_{\alpha_y}(1)=\gamma_{\alpha'_y}(1)$.
So,  we have a well-defined correspondence 
$
\mathbb{C}-\{l_0\cup\cdots\cup l_5\}\ni y\mapsto\gamma_{\alpha_y}(1)\in H_1(\pi^{-1}(y),\mathbb{Z}).
$
Then, we put
\begin{eqnarray}\label{gamma(y)}
\gamma=\gamma_{\alpha_y}(1) \in H_1(\pi^{-1}(y),\mathbb{Z})    \quad\quad (y\in\mathbb{C}-\{y_0,\cdots,y_5\}).
\end{eqnarray}

Next,  let $r_j$ $(j=0,1, \cdots, 5)$ be a closed arc on $\mathbb{C}-\{0,s_1 , \cdots, s_5\}$, starting at $b$, goes around $s_j $ with the positive orientation and ending at $b$. 
We assume that $r_j$ does not touch the cut line $l_k$ if $j\not= k$.
Let $t\mapsto u_j(t)$ $(0\leq t \leq 1)$ be the parametric representation of $r_j$.
For instance, we can take an arc $r_1$ as in Figure 5.
We choose $1$-cycles $\gamma_1(t)$ and $\gamma_2 (t)$ on $\pi^{-1}(u_j(t))$ which depend continuously on $t$ such that $\gamma_1(0)=\gamma_1$ and $\gamma_2(0)=\gamma_2$.
So, we have 
\begin{align*}
\begin{pmatrix} \gamma_1(1) \\  \gamma_2(1) \end{pmatrix}
=\begin{pmatrix} a_j & b_j \\ c_j & d_j \end{pmatrix}
 \begin{pmatrix} \gamma_1 \\  \gamma_2 \end{pmatrix},
\end{align*} 
where $a_j,b_j,c_j,d_j \in \mathbb{Z}$ and $a_j d_j-b_j c_j =1$.
The correspondence $\displaystyle r_j\mapsto M_j=\begin{pmatrix}a_j & b_j \\ c_j & d_j \end{pmatrix}$ gives a representation of the fundamental group $\pi_1(\mathbb{C} -\{0,s_1,\cdots,s_5\})$.
We call the matrix $M_j$ the monodromy matrix for $r_j$.

\begin{figure}[h]\label{cutFigure}
\center
\includegraphics{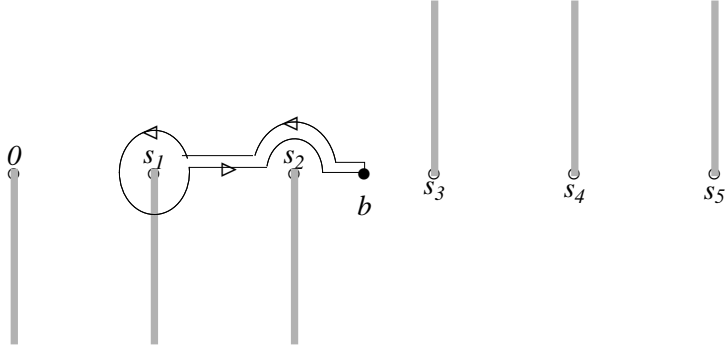}
\caption{The points $0,s_1,\cdots,s_6$, the cut lines and an arc $r_1$ going around $s_1$.}
\end{figure}

\begin{rem}
If an arc $r$ in the base space of an elliptic fibration goes around  a  singular fibre  with the positive orientation,
the monodromy matrix $M_r$ is obtained by 
K. Kodaira (\cite{Kodaira}, Theorem 9.1).
For example, if the  singular fibre  is of type $I_b$ $(b>0)$ or $III^*$, the monodromy matrix $M_r$ is given by $B^{-1} M_r^0 B$, where $M_r^0$ is given by Table 4 and $B\in GL(2,\mathbb{Z})$.
\begin{table}[h]
\center
\begin{tabular}{lc}
\toprule
Singular Fibre  &  Matrix $M_r^0$ \\  
  \midrule
$I_b$ & $\begin{pmatrix} 1& 0\\b&1 \end{pmatrix} $\\
$III^*$ &$\begin{pmatrix} 0& 1\\-1&0 \end{pmatrix} $\\
\bottomrule
\end{tabular}
\caption{The  matrices $M_r^0$ for the singular fibres  of type $I_b$ and $III^*$.}
\end{table} 
\end{rem}

\begin{lem}\label{LocalMonodoromyLemma}
The monodromy matrices $M_j$ for $\{\gamma_1,\gamma_2\}$ are given by {\rm Table 5}.
\end{lem}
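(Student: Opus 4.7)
The plan is to combine Kodaira's classification (Table 4) with the Picard--Lefschetz formula. Kodaira's theorem fixes the conjugacy class of each $M_j$ in $SL(2,\mathbb{Z})$: the class of $\begin{pmatrix}1&0\\2&1\end{pmatrix}$ for $j=1,\dots,5$ (type $I_2$), of $\begin{pmatrix}1&0\\5&1\end{pmatrix}$ for $j=0$ (type $I_5$), and of $\begin{pmatrix}0&1\\-1&0\end{pmatrix}$ at $\infty$ (type $III^*$). What remains is to pin down, for each $j$, the conjugating matrix $B_j\in GL(2,\mathbb{Z})$ by identifying the associated vanishing cycle $\delta_j\in H_1(\pi^{-1}(b),\mathbb{Z})$ in the basis $\{\gamma_1,\gamma_2\}$.

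I would first treat the five $I_2$ fibres. At $y=s_j$ two of the three branch points $\{\alpha(y),\beta(y),p(y)\}$ of $E(y)$ collide; Table 2 shows that the colliding pair is $(\alpha,p)$ for $j\in\{1,4,5\}$ and $(\beta,p)$ for $j\in\{2,3\}$. The vanishing cycle $\delta_j$ is then the preimage, under the hyperelliptic double cover, of an arc joining the two branch points that collide, transported back to $y=b$ along a path that respects the cut system $l_0,\dots,l_5$. Since $\gamma_1$ corresponds to a path from $\beta(b)$ to $p(b)$ and $\gamma_2$ to a path from $\alpha(b)$ to $\beta(b)$, each $\delta_j$ becomes an integer combination $m_j\gamma_1+n_j\gamma_2$ that can be read off directly from the cut configuration of Figure 5. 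The Picard--Lefschetz formula for an $I_2$ degeneration,
\[
T_{r_j}(\gamma)=\gamma+(\gamma\cdot\delta_j)\,\delta_j,
\]
then yields $M_j$ as the matrix of $T_{r_j}$ in $\{\gamma_1,\gamma_2\}$.

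For $r_0$ I would work locally near $y=0$: the functions $\alpha(y)$ and $-\beta(y)=y^2\sqrt{2y}$ both vanish at the origin with branching of order $5/2$ while $p(y)\to Y$, and the Weierstrass discriminant of $E(y)$ vanishes to order $5$ at $y=0$ (matching the $I_5$ classification). The local monodromy is a product of five simple collisions between $\alpha(y)$ and $\beta(y)$ as $y$ traverses a small loop, each contributing a Picard--Lefschetz transvection along the same vanishing cycle, namely the preimage of the arc from $\alpha(b)$ to $\beta(b)$, i.e.\ $\gamma_2$. This immediately conjugates $\begin{pmatrix}1&0\\5&1\end{pmatrix}$ into the basis $\{\gamma_1,\gamma_2\}$. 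The monodromy around $\infty$ can be obtained either by an analogous local analysis or, as a consistency check, from the global relation among the $M_j$ together with the $III^*$ conjugacy class.

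The principal obstacle will be the bookkeeping of signs and orientations. The labelling of $\{\alpha,\beta,p\}$ within $\{w_1,w_2,w_3\}$ permutes across each $s_j$ (Table 2), the single-valued branch $F(y,u)$ changes sign between consecutive $u$-intervals (Table 3), and the cut lines $l_j$ point downward for $j\le 2$ but upward for $j\ge 3$. Continuously transporting $\gamma_1$ and $\gamma_2$ along each arc $r_j$ in a way compatible with all of these conventions, and without accidentally crossing a cut $l_k$ for $k\neq j$, is the delicate combinatorial step; pinning down each $\delta_j$ in the basis $\{\gamma_1,\gamma_2\}$ with the correct signs is where the real content of the lemma lies.
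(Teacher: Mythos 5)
Your overall strategy is the same as the paper's: use Kodaira's classification to fix the conjugacy class of each $M_j$, identify the vanishing cycle at each singular fibre from the collision of the branch points $\alpha(y),\beta(y),p(y)$ read off from Table 2, and then pin down the conjugating matrix. Your reading of Table 2 is correct ($\alpha$ and $p$ collide at $s_1,s_4,s_5$, while $\beta$ and $p$ collide at $s_2,s_3$), your treatment of the $I_5$ fibre at $y=0$ --- five half-twists of the pair $\alpha,\beta$, each contributing one elementary transvection along $\gamma_2$ --- is sound, and obtaining $M_\infty$ from the global product relation plus the $III^*$ conjugacy class is a legitimate (arguably cleaner) alternative to the paper's ``same argument''.

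There is, however, one genuine error in the step that is supposed to produce the five $I_2$ matrices. The formula you state, $T_{r_j}(\gamma)=\gamma+(\gamma\cdot\delta_j)\,\delta_j$, is the Picard--Lefschetz transvection for a single node ($I_1$), i.e.\ for a \emph{half}-twist of two branch points. At $y=s_j$ the situation differs from $y=0$: the functions $\alpha,\beta,p$ are single-valued and analytic near $s_j$ (no square-root branching occurs there), so when two of them collide transversally, a loop around $s_j$ makes them perform a \emph{full} twist around each other. The monodromy is therefore the composition of two elementary transvections, $T_{r_j}(\gamma)=\gamma+2(\gamma\cdot\delta_j)\,\delta_j$, which is precisely why these fibres are of type $I_2$ rather than $I_1$. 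As written, your formula would yield matrices conjugate to $\begin{pmatrix}1&0\\1&1\end{pmatrix}$, contradicting both Table 5 and the Kodaira class $\begin{pmatrix}1&0\\2&1\end{pmatrix}$ that you invoke one paragraph earlier, so the argument is internally inconsistent until this coefficient is corrected. With the factor $2$ restored, and with the transport bookkeeping you rightly flag actually carried out (it is not purely cosmetic: the naive concatenation of arcs would suggest the vanishing cycle $\gamma_1+\gamma_2$ at $s_1,s_4,s_5$, whereas the sign conventions of Table 3 and the cuts force $\pm(\gamma_1-\gamma_2)$, the unique fixed cycle of $\begin{pmatrix}3&-2\\2&-1\end{pmatrix}$), your plan does reproduce Table 5.
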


\begin{table}[h]
\center
\begin{tabular}{cccc}
\toprule
 & Type of Singular Fibre  &  Monodromy Matrix for $\gamma_1 , \gamma_2$ \\  
  \midrule
$y_1$ & $I_2$& $M_1=\begin{pmatrix} 3& -2\\2&-1 \end{pmatrix}$ \\
$y_2$ & $I_2$ &$M_2=\begin{pmatrix} 1& 0\\ 2&1 \end{pmatrix}$ \\
$y_3$ & $I_2$ &$M_3=\begin{pmatrix} 1& 0\\2&1 \end{pmatrix}$ \\
$y_4$ & $I_2$ &$M_4=\begin{pmatrix} 3& -2\\2&-1 \end{pmatrix}$ \\
$y_5$ & $I_2$ &$M_5=\begin{pmatrix} 3& -2\\2&-1 \end{pmatrix}$ \\
$0$ & $I_5$ &$M_0=\begin{pmatrix} 1& -5\\ 0&1 \end{pmatrix}$ \\
$\infty$ & $III^*$ &$M_\infty=\begin{pmatrix} 3& 5\\ -2&-3 \end{pmatrix}$ \\
\bottomrule
\end{tabular}
\caption{The monodromy matrices $M_j$ $(j=0,1,\cdots,5,\infty)$.}
\end{table}

\begin{proof}
Let us determine matrix $M_2$ around $s_2$.
The fibre $\pi^{-1}(s_2)$ is a singular fibre  of type $I_2$. So, the monodromy matrix $M_2$ is in the form
\begin{align*}
M_2= B^{-1} \begin{pmatrix} 1 &0\\2 &1 \end{pmatrix} B,
\end{align*}
where $B\in GL(2,\mathbb{Z})$.
Observe that $p(y)=w^{(3)}_1(y)$  converges to $\beta(y)=w_2^{(3)}(y)$ when $y\rightarrow y_2+0$.  So, the matrix $M_2$  fixes the $1$-cycle $\gamma_1=\gamma_1^{(3)}$.
Hence, we have $B=I_2$ and $\displaystyle M_2=\begin{pmatrix} 1 &0\\2 &1 \end{pmatrix}.$
By the same argument, we  obtain Table 5. 
\end{proof}

\subsection{The transcendental lattice $\langle D_1,\cdots,D_4\rangle$}

From Table 5, we have the following relations:

\begin{align}
\begin{cases}
& M_1 M_2 M_4 M_3 =\begin{pmatrix}  1 & 4 \\ 0 & 1\end{pmatrix}, \quad 
M_1 M_2 M_5 M_3  =\begin{pmatrix}  1 & 4 \\ 0 & 1\end{pmatrix},\\
&M_2^{-1} M_3 =\begin{pmatrix} 1 & 0 \\ 0&1  \end{pmatrix}, \quad
M_0 M_1M_2 M_0^{-1} M_3^{-1} =\begin{pmatrix} 3 & -2 \\ 2&-1  \end{pmatrix}.   \\
\end{cases}
\end{align}

The transformation given by the matrix $M_1M_2M_4M_3$ fixes the $1$-cycle $\gamma_2$.
Let $\rho_1$ be a closed curve in $y-$plane  starting from the base point $b$  
and goes around $s_1,s_2,s_4$ and $s_3$ successively. 
Let $t\mapsto s(t)$ be a parametric representation of $\rho_1$.
For $0\leq t\leq 1$, we put the $1$-cycle $\gamma^{(1)}(t)$
 on the elliptic curve $\pi^{-1}(s(t))$.
 The $1$-cycle  $\gamma^{(1)}(t)$ depends continuously on $t$ and
   $\gamma^{(1)}(0)=\gamma^{(1)}(1)=\gamma_2$ on $\pi^{-1}(b)=\pi^{-1}(s(0))=\pi^{-1}(s(1))$.
Then, the set
$$
C_1=\displaystyle \bigcup_{0\leq t \leq 1}  \gamma^{(1)}(t)
$$
defines a $2$-cycle on the  surface $K(X,Y)$. 
Similarly, we have the $2$-cycles $C_2,C_3$ in Figure 6 and $C_4$ in Figure 7.

\begin{figure}\label{C1C2C3Fig}
\center
\includegraphics{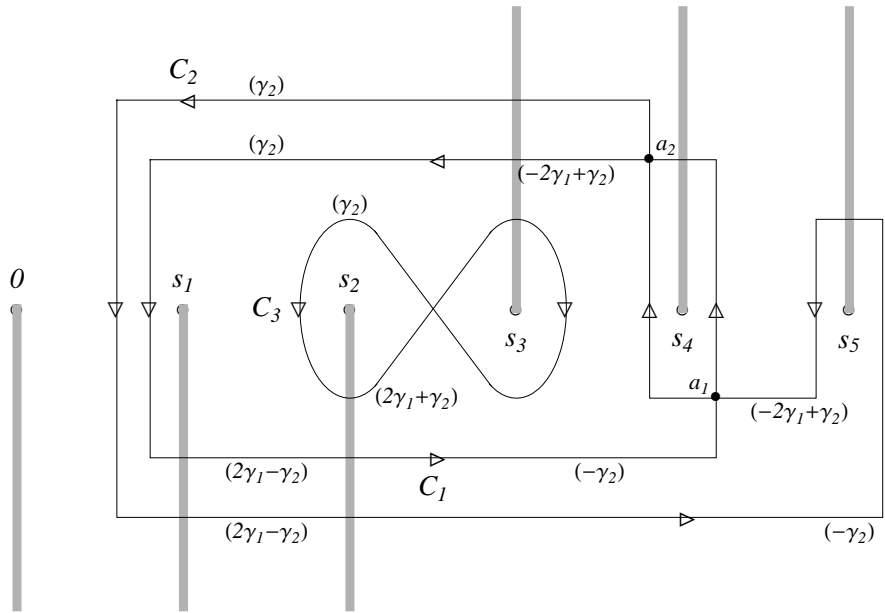}
\caption{$2$-cycles $C_1,C_2,C_3$.}
\end{figure}

\begin{figure}
\center
\includegraphics{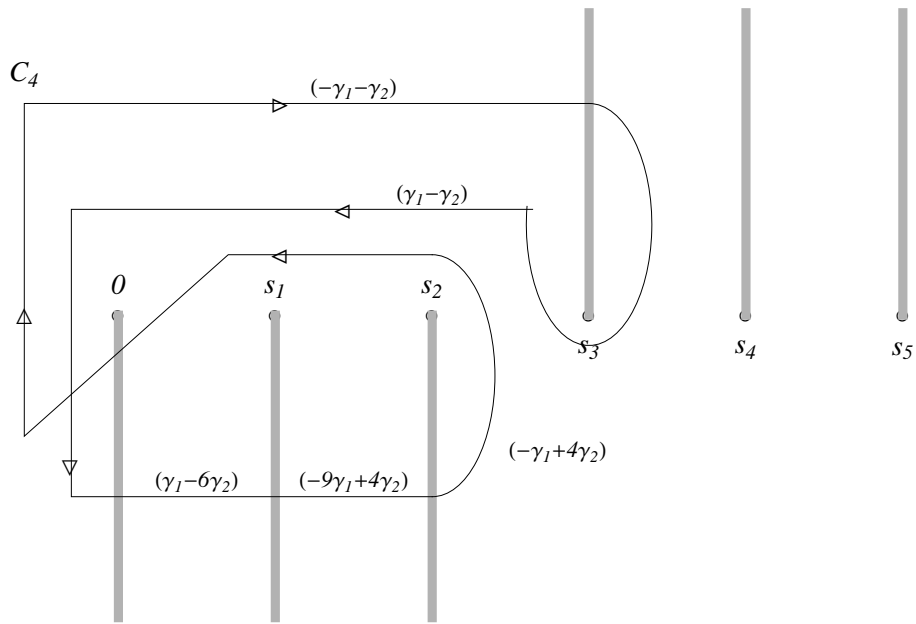}
\caption{$2$- cycle $C_4.$}
\end{figure}

\begin{lem}\label{LemC1C2C3C4}
The intersection matrix for $\{C_1,C_2,C_3,C_4\}$ is given by
\begin{align*}
 ( (C_j\cdot C_k))_{j,k=1,\cdots,4}=\begin{pmatrix} 0&2&0&0 \\ 2&0&0&0\\ 0&0&-4&-6 \\ 0&0&-6&-4 \end{pmatrix}.
\end{align*}
\end{lem}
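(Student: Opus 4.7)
The plan is to compute all six pairwise intersection numbers $(C_j \cdot C_k)$ by the standard tube-cycle calculus on the elliptic fibration $\pi \colon K(X,Y) \to \mathbb{P}^1(\mathbb{C})$. Each $C_j$ is a 2-cycle built by parallel-transporting a 1-cycle $\delta_j \in H_1(\pi^{-1}(b), \mathbb{Z})$ along a closed loop $\rho_j$ in the base (as pictured in Figures 6 and 7), and it is closed precisely because $\delta_j$ is invariant under the total monodromy around $\rho_j$. The four matrix identities displayed immediately above the lemma, together with Lemma \ref{LocalMonodoromyLemma}, pin down the linear combinations of $\{\gamma_1, \gamma_2\}$ that serve as $\delta_j$ for each $j$; in particular the first two identities force $\delta_1 = \delta_2 = \gamma_2$.

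For the off-diagonal entries $(C_j \cdot C_k)$ with $j \neq k$, I would deform $\rho_j, \rho_k$ within $\mathbb{C} \setminus \{0, s_1, \ldots, s_5\}$ into transverse position, so that
$$(C_j \cdot C_k) = \sum_\nu \epsilon(y_\nu) \cdot (\delta_j(y_\nu) \cdot \delta_k(y_\nu))_{\pi^{-1}(y_\nu)},$$
summed over transverse crossings $y_\nu \in \rho_j \cap \rho_k$, where the fibre-wise pairing is determined by $(\gamma_1 \cdot \gamma_2) = 1$ together with the monodromy transport of Table 5, and $\epsilon(y_\nu) = \pm 1$ is the base-crossing sign. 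From Figures 6 and 7 one can isotope $\rho_1, \rho_2$ away from $\rho_3, \rho_4$ to get the four vanishing entries, while $\rho_1$ and $\rho_2$ must cross, producing $2$, and $\rho_3, \rho_4$ contribute local intersections summing to $-6$. For the self-intersections $(C_j \cdot C_j)$, each $C_j$ is an embedded $S^2$ (since $\delta_j$ is monodromy-invariant), and I would push it off itself using a section of its normal bundle so that the answer decomposes into Picard–Lefschetz contributions at the singular fibres enclosed by $\rho_j$; these contributions cancel for $C_1, C_2$ because of the symmetric arrangement of enclosed $I_2$-vanishing cycles with respect to $\gamma_2$, while for $C_3, C_4$ the same count yields $-4$.

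The main obstacle I anticipate is the consistent bookkeeping of signs and orientations: the transverse-intersection formula, the Picard–Lefschetz signs, and the chosen parallel-transport paths together involve many compatible conventions, and one sign error propagates through the entire $4\times 4$ matrix. As an independent sanity check I would verify that the resulting Gram matrix is integrally equivalent to the transcendental lattice $A(2) = \begin{pmatrix} 0 & 2 \\ 2 & 0 \end{pmatrix} \oplus \begin{pmatrix} 4 & 2 \\ 2 & -4 \end{pmatrix}$ of $K(X,Y)$ forced by the theorem in Section 1.4; indeed the second block $\begin{pmatrix} -4 & -6 \\ -6 & -4 \end{pmatrix}$ has determinant $-20$ and signature $(1,1)$, matching $\begin{pmatrix} 4 & 2 \\ 2 & -4 \end{pmatrix}$, so the two presentations are compatible.
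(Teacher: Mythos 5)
Your plan for the off-diagonal entries is the paper's own proof: the paper's formula (\ref{intersectionFormula}) is precisely your transverse-crossing formula, except that it carries an overall factor $(-1)$ \emph{in addition to} the base-crossing sign $(\rho_j\cdot\rho_k)_{y_l}$; with your convention $\epsilon(y_\nu)=(\rho_j\cdot\rho_k)_{y_\nu}$ you would obtain $(C_1\cdot C_2)=-2$ instead of $+2$, so this is one of the sign issues you flagged and it must be fixed against the paper's orientation convention. The paper carries out only $(C_1\cdot C_2)=2$ explicitly, transporting $\gamma_2$ along $\rho_1$ and $\rho_2$ by the monodromies of Lemma \ref{LocalMonodoromyLemma}, and leaves the rest to ``the same argument''; your outline for those entries is sound in principle, though you never actually perform a computation.

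The genuine gap is your treatment of the diagonal. Each $C_j$ is the union of circles $\gamma^{(j)}(t)$ over a closed base loop, i.e.\ a mapping torus of the identity on $S^1$: it is a $2$-torus, not an $S^2$. More importantly, it is \emph{immersed}, not embedded, because the base loops are not simple closed curves: $\rho_1$ encircles $s_1,s_2,s_4,s_3$ in that order, and the order is essential, since $M_1M_2M_4M_3=\begin{pmatrix}1&4\\0&1\end{pmatrix}$ fixes $\gamma_2$ whereas the round-circle product $M_1M_2M_3M_4$ fixes no nonzero cycle (its trace is $-14$); an embedded loop enclosing exactly $s_1,\dots,s_4$ would be isotopic to that round circle, so $\rho_1$ must cross itself, and likewise $\rho_3$, which represents $r_2^{-1}r_3$ with windings of opposite sign. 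This is fatal for your push-off argument: if $C_j$ really were a tube over an embedded loop, you could push the loop to a disjoint parallel copy inside an annular neighbourhood containing no critical values, transport the same invariant cycle, and get a disjoint homologous copy of $C_j$, forcing $(C_j\cdot C_j)=0$ --- contradicting $(C_3\cdot C_3)=(C_4\cdot C_4)=-4$. The diagonal entries must instead be computed by the same crossing formula applied to the self-crossings of the immersed $\rho_j$: at each self-crossing the two branches carry $1$-cycles differing by the partial monodromy accumulated between the two passages, and these fibre-wise intersection numbers produce the $-4$'s and cancel to $0$ for $C_1,C_2$. Your appeal to ``Picard--Lefschetz contributions at the enclosed singular fibres'' cancelling by symmetry does not describe this mechanism and, as stated, would return $0$ in all four cases. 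Finally, your closing sanity check verifies only determinant and signature; the actual compatibility with $A(2)$ is the explicit integral base change $D_1=C_1$, $D_2=C_2$, $D_3=C_4-C_3$, $D_4=C_4$ of Corollary \ref{D1D2D3D4Cor}.
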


\begin{proof}
Let $\rho_j$ be the base arc of $C_j$.
For $y\in \rho_{j} $, let $\gamma^{(j)} (y)=C_j \cap \pi^{-1}(y)$.
Suppose the base arcs $\rho_j$ and $\rho_k$ intersect at   $s$ points $y_1,\cdots,y_s$ in $y$-plane. 
Then, the intersection number $(C_j\cdot C_k)$ is given by the following formula:
\begin{eqnarray}\label{intersectionFormula}
\displaystyle (C_j\cdot C_k)=\sum _{l=1}^s (-1) (\rho_{j}  \cdot \rho_{k} )_{y_l} (\gamma^{(j)}(y_l) \cdot \gamma^{(k)} (y_l)),
\end{eqnarray}
where $(\rho_j  \cdot \rho_k )_{y_l} $ is the intersection number of the base arcs $\rho_j$ and $\rho_k$ at the point $y_j$ and $(\gamma^{(j)}(y_l) \cdot \gamma^{(k)} (y_l))$ is the intersection number of $1$-cycles on the elliptic curve $\pi^{-1}(y_j)$.
See Figure 6.
The base arc $\rho_1$ and $\rho_2$ intersect at $2$ points $a_1$ and $a_2$.
We have  $(\rho_1\cdot\rho_2)_{a_1}=+1$  and $(\rho_1\cdot\rho_2)_{a_2}=-1$.
Then, from (\ref{intersectionFormula}), we have
$$
(C_1\cdot C_2)=(-1)(+1)(-\gamma_2\cdot -2\gamma_1+\gamma_2)+(-1)(-1)(-2\gamma_1+\gamma_2\cdot-2\gamma_1+\gamma_2)=(-1)(-2)+0=2.
$$
By the same argument, the claim follows.
\end{proof}

Due to the above lemma, the following corollary is obvious.

\begin{cor}\label{D1D2D3D4Cor}
Put
\begin{align}
D_1=C_1,
\quad
D_2=C_2,\quad
D_3=C_4-C_3,\quad
D_4=C_4.
\end{align}
Then, the intersection matrix for $\{D_1,\cdots,D_4\}$ is given by 
\begin{align}
 ( (D_j\cdot D_k))_{j,k=1,\cdots,4}=\begin{pmatrix} 0&2&0&0 \\ 2&0&0&0\\ 0&0&4&2 \\ 0&0&2&-4 \end{pmatrix}.
\end{align}
\end{cor}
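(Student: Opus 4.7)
The plan is to deduce the corollary from Lemma \ref{LemC1C2C3C4} by pure bilinear expansion of the intersection pairing, since each $D_j$ is an explicit integer linear combination of the $C_k$. Write $M=((C_j\cdot C_k))$, which by the lemma is block-diagonal with a hyperbolic $2\times 2$ block $\bigl(\begin{smallmatrix}0&2\\2&0\end{smallmatrix}\bigr)$ in the $(1,2)$-slot and a block $\bigl(\begin{smallmatrix}-4&-6\\-6&-4\end{smallmatrix}\bigr)$ in the $(3,4)$-slot. The transformation $(D_1,D_2,D_3,D_4) = (C_1,C_2,C_4-C_3,C_4)$ is given by the base-change matrix
\[
T=\begin{pmatrix} 1&0&0&0\\ 0&1&0&0\\ 0&0&-1&1\\ 0&0&0&1 \end{pmatrix},
\]
so it suffices to compute $T M \,{}^t\! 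T$.

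The first step is to observe that $T$ is block-diagonal with the same $(1,2)$ versus $(3,4)$ splitting as $M$. Consequently the $(1,2)$-block of $TM\,{}^t\!T$ is unchanged from the $(C_1,C_2)$-block, and all cross terms between $\{D_1,D_2\}$ and $\{D_3,D_4\}$ vanish automatically. The second step is to handle the nontrivial $2\times 2$ block: with $S=\bigl(\begin{smallmatrix}-1&1\\0&1\end{smallmatrix}\bigr)$, one computes
\[
S\begin{pmatrix} -4&-6\\-6&-4 \end{pmatrix}{}^t\!S
= \begin{pmatrix} 4&2\\2&-4 \end{pmatrix},
\]
using in particular $(C_4-C_3)^2 = -4 + 12 - 4 = 4$ and $(C_4-C_3)\cdot C_4 = -4+6 = 2$.

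There is no genuine obstacle here beyond bookkeeping; the key structural point that makes the proof a one-liner is the block-diagonality of $M$ together with the fact that the new basis mixes only $C_3$ and $C_4$. Assembling the two blocks yields precisely the claimed matrix
\[
((D_j\cdot D_k))_{j,k=1,\dots,4}=\begin{pmatrix} 0&2&0&0 \\ 2&0&0&0\\ 0&0&4&2 \\ 0&0&2&-4 \end{pmatrix},
\]
which completes the proof.
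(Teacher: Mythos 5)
Your proposal is correct and matches the paper's approach: the paper simply declares the corollary ``obvious'' from Lemma \ref{LemC1C2C3C4}, and your base-change computation $T M\,{}^t\!T$ (with the block structure and the checks $(C_4-C_3)^2=4$, $(C_4-C_3)\cdot C_4=2$) is exactly the bilinear expansion that justification implicitly relies on. Nothing is missing; you have merely written out what the paper leaves to the reader.
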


\begin{prop}\label{TransProp}
The system
$\{D_1,D_2,D_3,D_4\} $ gives a basis of the transcendental lattice of $K(X,Y)$ 
with the intersection matrix $A(2)$.
\end{prop}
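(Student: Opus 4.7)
The plan is to establish two things: first, that each $D_j$ actually lies in ${\rm Tr}(K(X,Y))$, i.e.\ is orthogonal to every class in ${\rm NS}(K(X,Y))$; and second, that the four classes span ${\rm Tr}(K(X,Y))$ over $\mathbb{Z}$, not merely a proper sublattice of finite index. The stated intersection form is then immediate from Corollary \ref{D1D2D3D4Cor}.

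For the first point I would use the explicit generating set of ${\rm NS}(K(X,Y))\otimes_\mathbb{Z} \mathbb{Q}$ furnished by Proposition \ref{NSKummerProp}: the components $\Theta_{y,k}$ of the singular fibres of $\pi$, the zero section $O$, and a general fibre $F$. By construction each $C_j$ is fibered over a closed loop $\rho_j$ in the base that lies in $\mathbb{P}^1(\mathbb{C})\setminus\{0,s_1,\ldots,s_5,\infty\}$, so $C_j$ is entirely contained in the smooth locus of $\pi$ and is therefore set-theoretically disjoint from every $\Theta_{y,k}$, giving $C_j\cdot \Theta_{y,k}=0$. Choosing a generic $y_0$ off $\rho_j$, we have $F=\pi^{-1}(y_0)$ disjoint from $C_j$, so $C_j\cdot F=0$. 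Finally, the zero section $O$ meets each smooth fibre at the point at infinity of the Weierstrass model, whereas $\gamma^{(j)}(t)$ is a real $1$-cycle lying in the affine chart, so $O\cap C_j=\emptyset$ and $C_j\cdot O=0$. Hence each $C_j$, and therefore each $D_j=\pm C_j$ or $C_4-C_3$, pairs trivially with every $\mathbb{Q}$-generator of ${\rm NS}(K(X,Y))$, so $D_j\in {\rm Tr}(K(X,Y))$.

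For the second point I would argue by a discriminant comparison. The Gram matrix in Corollary \ref{D1D2D3D4Cor} has determinant $(-4)\cdot(-20)=80$, so $\{D_1,\ldots,D_4\}$ is $\mathbb{Q}$-linearly independent and generates a full-rank sublattice $L'\subseteq {\rm Tr}(K(X,Y))$. By the earlier theorem, ${\rm Tr}(K(X,Y))$ itself is $A(2)$, which also has discriminant $80$. The index formula $|\det L'|=[\,{\rm Tr}(K(X,Y)):L'\,]^2 \cdot |\det {\rm Tr}(K(X,Y))|$ then forces $[\,{\rm Tr}(K(X,Y)):L'\,]=1$, so $L'={\rm Tr}(K(X,Y))$ and $\{D_1,\ldots,D_4\}$ is a $\mathbb{Z}$-basis, automatically with intersection matrix $A(2)$.

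The main subtlety is the first step: one needs the loops $\rho_j$ of Figures 6 and 7 to be deformable, without altering the classes of the $C_j$, into the complement of $\{0,s_1,\ldots,s_5,\infty\}$, so that the disjointness assertions against the $\Theta_{y,k}$ and $F$ are literal. This is easy to arrange in the present concrete setup because $\rho_j$ never passes through any of the $s_k$ by construction, but it is the one place where the geometric picture, rather than pure lattice arithmetic, does the real work.
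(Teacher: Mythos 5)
Your proof is correct and follows essentially the same route as the paper: the paper likewise deduces $D_j\in{\rm Tr}(K(X,Y))$ from the fact that the $2$-cycles avoid the singular fibres combined with Proposition \ref{NSKummerProp}, and then concludes that they form a basis by comparison with the known intersection matrix $A(2)$ of the transcendental lattice. The only difference is that you spell out what the paper leaves implicit, namely the orthogonality to the section $O$ and to a general fibre $F$, and the discriminant-index argument $|\det L'|=[{\rm Tr}(K(X,Y)):L']^2\,|\det {\rm Tr}(K(X,Y))|$ forcing the index to be $1$.
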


\begin{proof}
By the above construction, $2$-cycle $D_j$ $(j=1,\cdots,4) $ does not touch the singular fibres of $(K(X,Y),\pi,\mathbb{P}^1(\mathbb{C}))$. 
So, from Theorem \ref{HilbertRem} and Proposition \ref{NSKummerProp}, 
the system $\{D_1,\cdots ,D_4\}$  gives a basis of ${\rm Tr} (K(X,Y))$.
\end{proof}

\subsection{The $2$ cycles $L_1,\cdots,L_6$}

Next, we define the $2$-cycles $L_1,\cdots,L_6$ on $K(X,Y)$.
Let $\varrho_j$ $(j=1,\cdots,6)$ be an arc in $y$-plane with a parametric representation $t\mapsto q_j(t)$ $(0\leq t \leq 1)$ whose start point and end point is given by Table 6.
Remark that  we take them such that $\varrho_j$ does not touch the cut lines $l_k$ $(k\in \{0,\cdots,5\})$ if $0<t<1$.
Hence,  we  can put a $1$-cycle $\delta^{(j)} (q_j(t))$  on $\pi^{-1}(q_j(t))$ as Table 6   with the manner in (\ref{gamma(y)}).  
Then, we can see that
$
L_j=\displaystyle \bigcup _{0\leq t\leq 1} \delta^{(j)} (q_j(t))
$
gives a $2$-cycle on $K(X,Y)$ (see Figure 8).

\begin{table}[h]\center
\begin{tabular}{ccccccc}
\toprule
  & $L_1$ &  $L_2$ & $L_3$ & $L_4$& $L_5$ & $L_6$ \\  
  \midrule
start point of $\varrho_j$ & $s_5$& $s_4$& $s_3$  &$s_2$ &$s_1$ &$0$\\
end point of $\varrho_j$ & $\infty$ &$s_5$& $ \infty$ & $s_3$ &$s_4$ & $\infty$ \\
$1$-cycle $\delta^{(j)}$ & $\gamma_1-\gamma_2$& $\gamma_1-\gamma_2$ &$\gamma_1$ &$\gamma_1$ & $\gamma_1-\gamma_2$ & $\gamma_2$\\
\bottomrule
\end{tabular}
\caption{The  arc $\varrho_j$  and $1$-cycles for $2$-cycles $L_j$ $(j=1,\cdots,6).$}
\end{table}

As we proved Lemma \ref{LemC1C2C3C4}, we can prove the following lemma and corollary.

\begin{lem}
\begin{align}
((L_j\cdot C_k))_{1\leq j\leq 6,1\leq k\leq 4}=\begin{pmatrix} 0&1&0&0 \\ 1&-1&0&0 \\ 1&1&1&-1 \\ 0&0&-2&-3 \\ 0&1&-2&-3 \\ 0&0&2&0 \end{pmatrix}.
\end{align}
\end{lem}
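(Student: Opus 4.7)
The proof proposal is to proceed exactly as in the proof of Lemma \ref{LemC1C2C3C4}, applying the intersection formula (\ref{intersectionFormula})
$$
(L_j \cdot C_k) = \sum_l (-1)\,(\varrho_j \cdot \rho_k)_{y_l}\,(\delta^{(j)}(y_l) \cdot \gamma^{(k)}(y_l))
$$
to each of the $24$ pairs $(L_j, C_k)$. The computation is a bookkeeping exercise built from three ingredients, which I would organize in this order.

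First, I would draw, for each pair $(j,k)$, the base arcs $\varrho_j$ (from Table 6) and $\rho_k$ (the bounding loops of $C_k$ from Figures 6 and 7) on a single picture of the punctured $y$-plane $\mathbb{C}\setminus\{0, s_1, \ldots, s_5\}$ with the cut lines $l_0, \ldots, l_5$ marked. Reading off the finite list of transversal intersection points $y_l$ and their local intersection signs $(\varrho_j \cdot \rho_k)_{y_l} = \pm 1$ is a direct, if tedious, visual step. Many pairs will contribute $0$ trivially (for instance, those whose base arcs can be chosen disjoint), which accounts for the zeros in the matrix.

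Second, at each intersection point $y_l$, I need the $1$-cycles $\delta^{(j)}(y_l)$ and $\gamma^{(k)}(y_l)$ on the elliptic fiber $\pi^{-1}(y_l)$. Both are obtained by analytic continuation along the appropriate portion of $\varrho_j$ or $\rho_k$ starting from the base point $b\in (s_2,s_3)$, using the prescription (\ref{gamma(y)}) and the initial data from Table 6 (for $\delta^{(j)}$) and from the construction of $C_k$ (for $\gamma^{(k)}$). Here the local monodromies $M_1,\ldots,M_5,M_0,M_\infty$ from Lemma \ref{LocalMonodoromyLemma} govern how the cycles transform when the continuation path winds around a puncture, and the cut-line convention ensures the continuations are well defined. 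Once each cycle is expressed as an integer combination $a\gamma_1 + b\gamma_2$ on a common fiber, the intersection number on the torus is just the determinant, computed from $(\gamma_1 \cdot \gamma_2) = 1$.

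Third, I would sum the contributions $-(\varrho_j \cdot \rho_k)_{y_l}(\delta^{(j)}(y_l)\cdot\gamma^{(k)}(y_l))$ over $l$ for each $(j,k)$ and verify that the answers match the claimed matrix. As a sanity check, I would verify a few entries by an alternative route: for example, entries in the last row (involving $L_6$, whose base arc runs from $0$ to $\infty$ with fibre cycle $\gamma_2$) can be cross-checked against the structure of the $I_5$ and $III^*$ fibres at $0$ and $\infty$.

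The main obstacle is not conceptual but combinatorial: one must be scrupulously consistent in choosing the specific representatives of the arcs $\varrho_j$ relative to the cut lines $l_0,\ldots,l_5$, and in tracking the sign conventions for $1$-cycle intersections on each fiber. A wrong choice of homotopy class for $\varrho_j$ (one that crosses a different set of cuts) would change $\delta^{(j)}$ by a monodromy factor and therefore change the resulting integers. The safest approach is to fix, once and for all, a standard picture of the six arcs $\varrho_1,\ldots,\varrho_6$ and the four loops $\rho_1,\ldots,\rho_4$ compatible with Figures 5--7, and to carry out all $24$ calculations against that single picture.
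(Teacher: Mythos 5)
Your proposal is correct and is essentially identical to the paper's own argument: the paper proves this lemma simply by invoking the same method as Lemma \ref{LemC1C2C3C4}, namely applying the formula (\ref{intersectionFormula}) to the base arcs $\varrho_j$ and $\rho_k$ and the fibre $1$-cycles expressed in the basis $\{\gamma_1,\gamma_2\}$, exactly as you describe. Your additional care about cut-line conventions and monodromy consistency is exactly the bookkeeping the paper leaves implicit.
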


\begin{cor}\label{LDCor}
\begin{align}
((L_j\cdot D_k))_{1\leq j\leq 6,1\leq k\leq 4}=\begin{pmatrix} 0&1&0&0 \\ 1&-1&0&0 \\ 1&1&-2&-1 \\ 0&0&-1&-3 \\ 0&1&-1&-3 \\ 0&0&-2&0 \end{pmatrix}.
\end{align}
\end{cor}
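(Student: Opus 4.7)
The plan is to read off Corollary \ref{LDCor} directly from the preceding lemma giving $(L_j\cdot C_k)$, using only the definitions $D_1=C_1$, $D_2=C_2$, $D_3=C_4-C_3$, $D_4=C_4$ recorded in Corollary \ref{D1D2D3D4Cor} together with the $\mathbb{Z}$-bilinearity of the intersection pairing on $H_2(K(X,Y),\mathbb{Z})$. In other words, the corollary is almost tautological once the preceding lemma is in hand, so the whole proof reduces to a column operation on a $6\times 4$ matrix.

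First I would observe that for each fixed $j\in\{1,\dots,6\}$ the four entries of the $j$-th row of the target matrix can be written as
$$(L_j\cdot D_1)=(L_j\cdot C_1),\quad (L_j\cdot D_2)=(L_j\cdot C_2),\quad (L_j\cdot D_3)=(L_j\cdot C_4)-(L_j\cdot C_3),\quad (L_j\cdot D_4)=(L_j\cdot C_4).$$
Equivalently, the desired matrix is the one from the previous lemma right-multiplied by the column-change matrix
$$T=\begin{pmatrix} 1 & 0 & 0 & 0 \\ 0 & 1 & 0 & 0 \\ 0 & 0 & -1 & 0 \\ 0 & 0 & 1 & 1 \end{pmatrix}.$$

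Next I would carry out this column operation row by row, leaving the first two columns unchanged, replacing the third by (fourth column)$\,-\,$(third column), and keeping the fourth column as is. This yields successively the rows $(0,1,0,0)$, $(1,-1,0,0)$, $(1,1,-2,-1)$, $(0,0,-1,-3)$, $(0,1,-1,-3)$, $(0,0,-2,0)$, exactly matching the claimed matrix.

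There is essentially no obstacle to address here: all of the geometric work, namely the choice of base arcs $\varrho_j$, the continuous families of $1$-cycles $\delta^{(j)}(q_j(t))$, and the application of the intersection formula (\ref{intersectionFormula}) on arcs that may cross, has already been absorbed into the preceding lemma. The only point requiring any care is keeping track of signs when substituting $D_3=C_4-C_3$ into the third column; this is pure book-keeping and involves no new geometric input.
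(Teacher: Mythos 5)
Your proposal is correct and matches the paper's intent: the paper states this result as a corollary of the preceding lemma on $(L_j\cdot C_k)$, derived exactly as you do by substituting $D_1=C_1$, $D_2=C_2$, $D_3=C_4-C_3$, $D_4=C_4$ from Corollary \ref{D1D2D3D4Cor} and using bilinearity of the intersection pairing. Your column-operation bookkeeping (third column replaced by fourth minus third) reproduces the stated matrix entry by entry, so there is nothing to add.
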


\begin{figure}[h]
\center
\includegraphics[width=10cm,height=4cm]{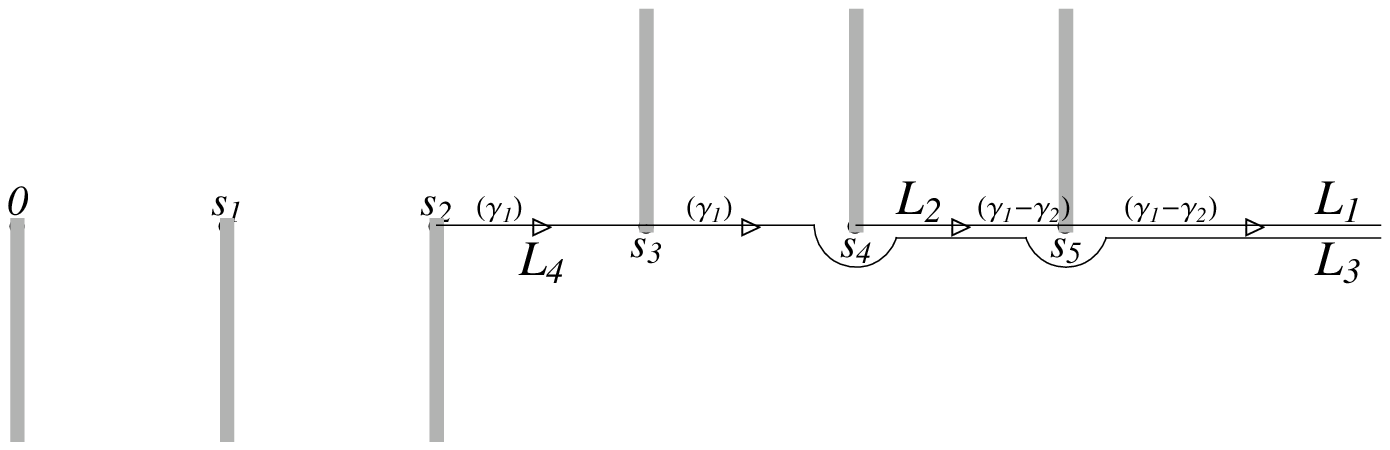}
\\
\includegraphics[width=10cm,height=5cm]{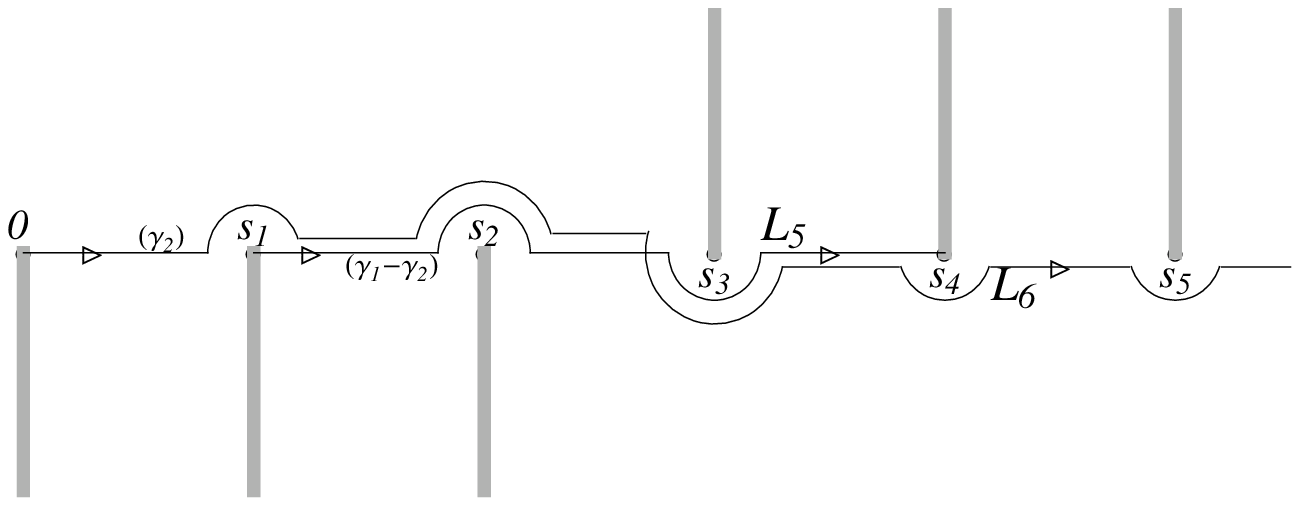}
\caption{$2$-cycles  $L_1,L_2,L_3,L_4, L_5$ and $L_6$.}
\end{figure}

\begin{prop}\label{DeltaThm}
A branch of the period mapping $\Phi_K$ in {\rm (\ref{KummerPeriod})} on $U_0$ has the following expression:
\begin{align}\label{intlemma}
\begin{cases}
\vspace{2mm}
&\displaystyle \int _{\Delta_1} \omega_K=\int_{ L_1+L_2} \omega_K, \quad \int_{\Delta_2}\omega_K=\int_{L_1} \omega_K=\int_{L_5 - L_4}\omega_K,\\
\vspace{2mm}&\displaystyle \int_{\Delta_3} =\int_{ -L_4 - 3(L_6+L_5-L_4 -L_3+L_2+L_1)}\omega_K, \\
&\displaystyle\int_{\Delta_4}\omega_K=\int_{L_6+L_5-L_4 -L_3+L_2+L_1}\omega_K.
\end{cases}
\end{align}
\end{prop}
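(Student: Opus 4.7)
The plan is to prove each identity in (\ref{intlemma}) by showing that the $2$-cycle $\Delta_j$ and the corresponding linear combination of $L_k$'s are homologous modulo ${\rm NS}(K(X,Y)) \otimes \mathbb{Q}$. Since $\omega_K$ is a holomorphic $(2,0)$-form on the $K3$ surface $K(X,Y)$ and ${\rm NS}(K(X,Y))$ lies in $H^{1,1}$ by the Lefschetz $(1,1)$-theorem, the integral $\int_{\alpha}\omega_K$ vanishes whenever $\alpha \in {\rm NS}(K(X,Y)) \otimes \mathbb{C}$, so equality of periods follows from homology modulo ${\rm NS}$.

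The first step is to characterize $\Delta_j$ through its intersection numbers with the basis $\{D_1,\ldots,D_4\}$ of the transcendental lattice from Proposition \ref{TransProp}. Since ${\rm Tr}(K(X,Y))$ is the orthogonal complement of ${\rm NS}(K(X,Y))$ in $H_2(K(X,Y),\mathbb{Z})$, the tuple $((\alpha \cdot D_k))_{k=1}^{4}$ determines the transcendental component of any class $\alpha \in H_2 \otimes \mathbb{Q}$ uniquely. Combining Remark \ref{DualRemark} applied to $K(X,Y)$ with the Shioda--Inose Hodge isometry $\chi_*: {\rm Tr}(S(X,Y))(2) \to {\rm Tr}(K(X,Y))$, and choosing the basis so that $D_k$ corresponds to $\chi_*(\check{\Gamma}_k)$, the defining duality for $\Delta_j = \chi_*(\Gamma_j)$ takes the form $(\Delta_j \cdot D_k) = \delta_{j,k}$.

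The computational core of the proof is then to verify, from the matrix of Corollary \ref{LDCor}, that each right-hand side in (\ref{intlemma}) produces the appropriate standard basis vector. The first two identities are immediate: $L_1 + L_2$ gives the row $(1,0,0,0)$, matching $\Delta_1$, while $L_1$ and $L_5 - L_4 = (0,1,-1,-3) - (0,0,-1,-3) = (0,1,0,0)$ both give $(0,1,0,0)$, matching $\Delta_2$. Direct summation of six rows of Corollary \ref{LDCor} yields $L_6 + L_5 - L_4 - L_3 + L_2 + L_1 \mapsto (0,0,0,1)$, establishing the $\Delta_4$ identity; substituting this into the $\Delta_3$ expression gives $-L_4 - 3(0,0,0,1) = (0,0,1,3) - (0,0,0,3) = (0,0,1,0)$, matching $\Delta_3$.

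The main obstacle is pinning down the normalization $(\Delta_j \cdot D_k) = \delta_{j,k}$, since this requires carefully tracking the factor $2$ arising from the Hodge isometry ${\rm Tr}(S(X,Y))(2) \simeq {\rm Tr}(K(X,Y))$ in combination with the duality of Remark \ref{DualRemark}. Once that identification is fixed, the four identities reduce to finite arithmetic with Corollary \ref{LDCor}; the $\Delta_3$ expression, with its coefficient $3$ and six summands, is the most delicate piece of bookkeeping but introduces no conceptually new ingredient.
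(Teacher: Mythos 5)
Your proof is correct and takes essentially the same route as the paper: both reduce the claim to the duality relations $(\Delta_j\cdot D_k)=\delta_{jk}$ (via Remark \ref{DualRemark}, Proposition \ref{equalLem} and Proposition \ref{TransProp}) and then verify them by row arithmetic with the matrix of Corollary \ref{LDCor}, which you carry out correctly. The factor-of-$2$ normalization you flag as the main obstacle is glossed over in the paper in exactly the same way, and it is harmless in either treatment because $\omega_K$ is fixed only up to a constant and $\Phi_K$ is valued in projective space, so a common scalar in all four periods drops out.
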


\begin{proof}
According to Proposition \ref{TransProp},
$\{D_1,\cdots,D_4\}$ gives a basis of ${\rm Tr}(K(X,Y))$.
Recall
the construction of $2$-cycles $\Gamma_1,\cdots,\Gamma_4$ on $S(X,Y)$ in Remark \ref{DualRemark}.
Together with Prposition \ref{equalLem}, 
it is sufficient to take $2$-cycles $\Delta_1,\cdots,\Delta_4\in {\rm H}_2(K(X,Y),\mathbb{Z})$
such that   $(\Delta_j\cdot D_k)=\delta_{jk}$.
By Corollary \ref{LDCor}, we can check that the $2$-cycles in the right hand side of (\ref{intlemma}) satisfies these properties.
\end{proof}

\subsection{The cambers $R_1,R_2,R_3$ and $R_4$}

\begin{figure}[h]
\center
\includegraphics[width=5cm,height=5cm]{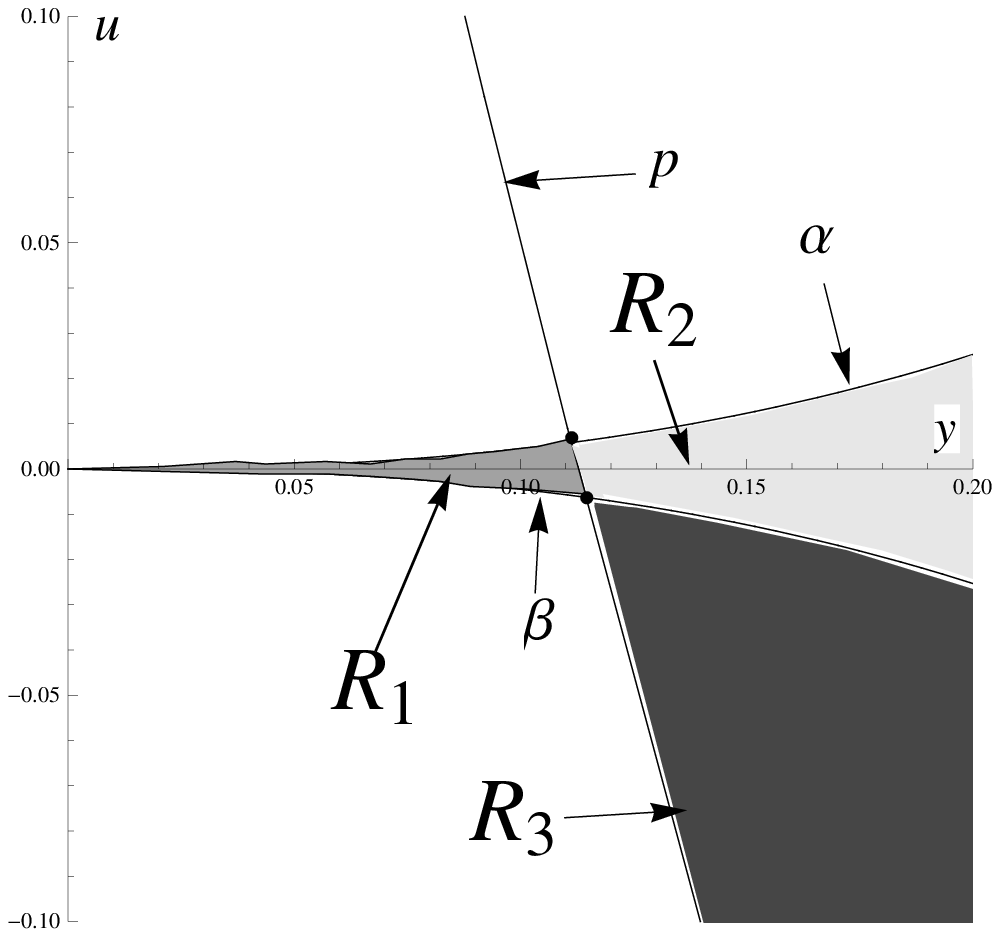}
\quad
\includegraphics[width=5cm,height=5cm]{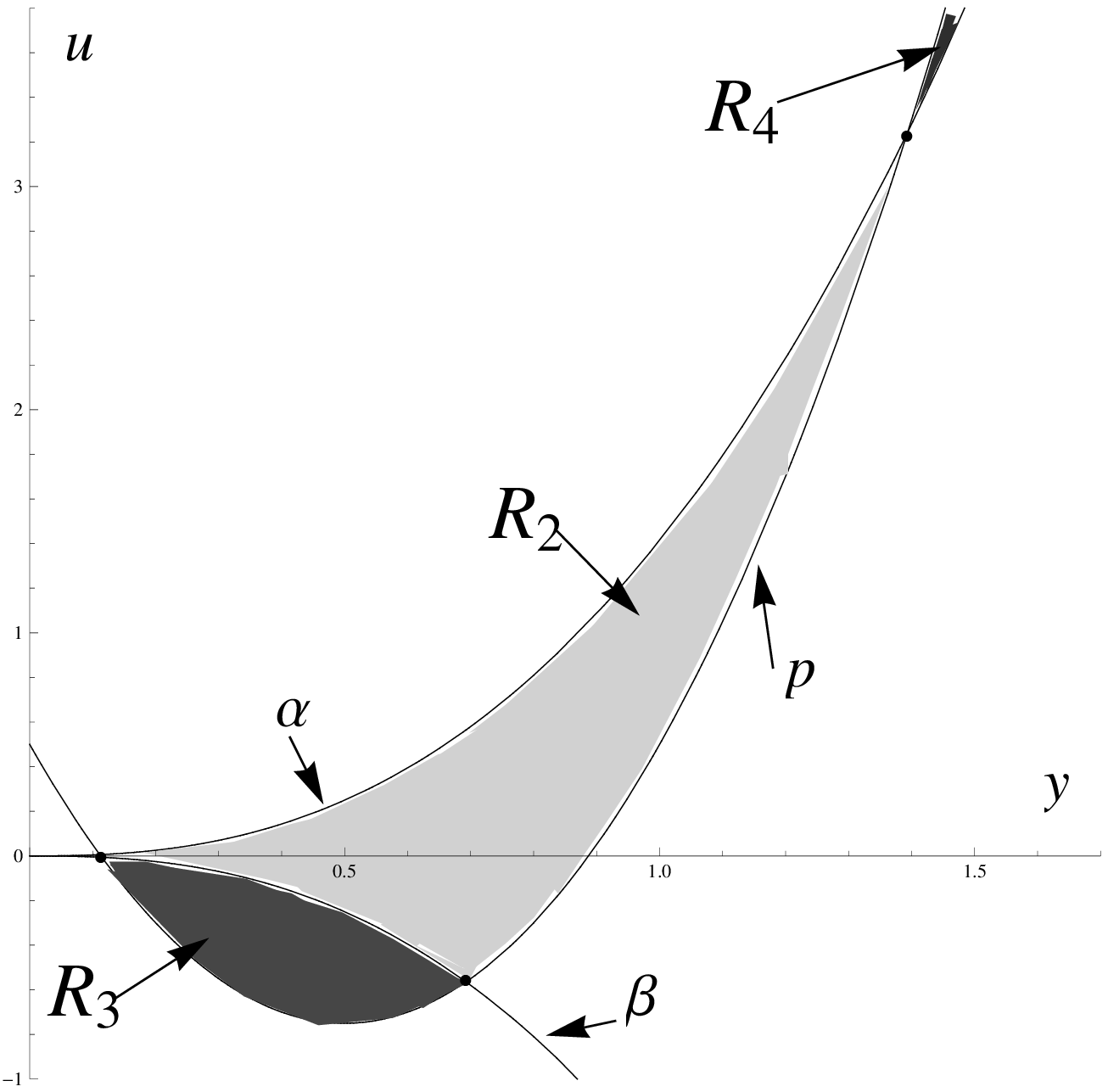}
\quad
\includegraphics[width=5cm,height=5cm]{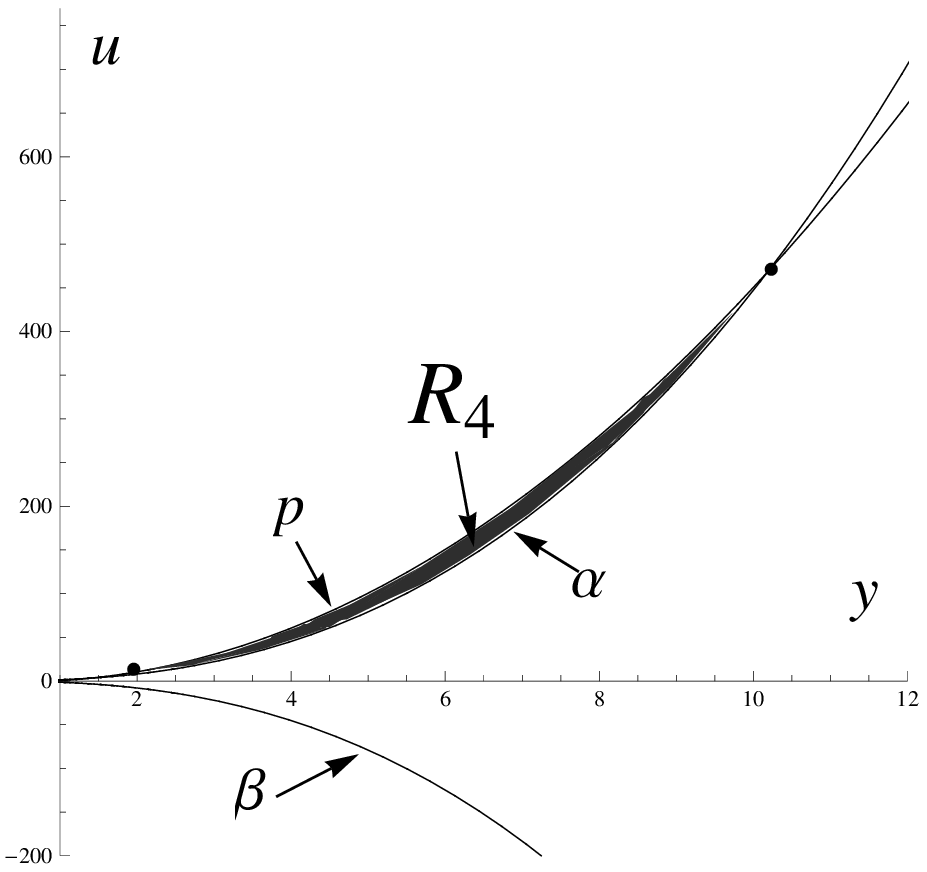}\\
{\small
$0<y<0.2$   \hspace{3cm} $0<y<1.5$ \hspace{3cm} $0<y<12$
}
\caption{The chambers $R_1,R_2,R_3$ and $R_4$.}
\end{figure}

We set the chambers in $\mathbb{R}^2$ (see Figure 9):
{\small
\begin{align}\label{R1R2R3R4}
\begin{cases}
 &R_1= \{(u,y)|0\leq y \leq s_2, w_1(y) \leq u \leq w_2(y) \}, \\
 & R_2= \{(u,y)|s_1\leq y \leq s_4, w_2(y)\leq u \leq w_3(y) \}, \\
 &R_3= \{(u,y)|s_2\leq y \leq s_3, w_1(y)\leq u \leq w_2(y)\}, \\
& R_4=\{(u,y)|s_4\leq y \leq s_5,w_2(y)\leq u \leq w_3(y) \}.
\end{cases}
\end{align}
}
They are surrounded by the branch divisors $P$ and $Q$.
From Table 2, we obtain Table 7.

\begin{table}[h]
\center
\begin{tabular}{ccc}
\toprule
$y$ & $\displaystyle \frac{1}{2}\Big(\int_{\gamma_1(y)}\omega_y\Big)$ & $\displaystyle \frac{1}{2}\Big(\int_{\gamma_2(y)}\omega_y\Big)$ \\  
  \midrule
$0<y<s_1$ & $\displaystyle \int_{\alpha(y)}^{p(y)}\frac{du}{F(u,y)}+\int_{p(y)}^{\infty}\frac{du}{F(u,y)}$& $ \displaystyle \int_{\alpha(y)}^{\beta(y)} \frac{du}{F(u,y)}  $ \\
$s_1<y<s_2$ & $\displaystyle \int_{p(y)}^{\beta(y)}\frac{du}{F(u,y)}$ & $\displaystyle \int_{\alpha(y)}^{p(y)}\frac{du}{F(u,y)}+ \int_{p(y)}^{\beta(y)}\frac{du}{F(u,y)}$  \\
$s_2<y<s_3$ & $\displaystyle \int_{\beta(y)}^{p(y)}\frac{du}{F(u,y)}$ &$\displaystyle \int_{\alpha(y)}^{\beta(y)}\frac{du}{F(u,y)}$ \\
$s_3<y<s_4$ & $ \displaystyle \int_{p(y)}^{\beta(y)}\frac{du}{F(u,y)}$ &$ \displaystyle \int_{\alpha(y)}^{p(y)}\frac{du}{F(u,y)}+ \int_{p(y)}^{\beta(y)}\frac{du}{F(u,y)}$ \\
$s_4<y<s_5$ & $\displaystyle \int_{\alpha(y)}^{p(y)}\frac{du}{F(u,y)}+\int_{p(y)}^{\infty}\frac{du}{F(u,y)}$ &$ \displaystyle \int_{p(y)}^\infty \frac{du}{F(u,y)} $  \\
$s_5<y$ & $\displaystyle  \int_{p(y)}^{\beta(y)}\frac{du}{F(u,y)}$ &$ \displaystyle \int_{\alpha(y)}^{p(y)}\frac{du}{F(u,y)}+ \int_{p(y)}^{\beta(y)}\frac{du}{F(u,y)}$ \\
\bottomrule
\end{tabular}
\caption{The elliptic integrals on $E(y)$ for $(s_{j-1},s_{j})$.}
\end{table}

\begin{thm}\label{DoubleIntegralThm}
A branch of the period mapping $\Phi_K$ in {\rm (\ref{KummerPeriod})} on $U_0$ is given by the following double integrals on the chambers $R_1,R_2,R_3$ and $R_4$:
\begin{align}\label{AlgIntegral}
\begin{cases}
&\displaystyle \int_{\Delta_1} \omega = 2 \int_{R_2} \frac{du dy}{F(u,y)} +2 \int_{R_4} \frac{du dy}{F(u,y)}, \quad\quad
\displaystyle \int_{\Delta_2} \omega =2 \int_{R_2} \frac{du dy}{F(u,y)}, \\
&\displaystyle \int_{\Delta_3} \omega =6 \int_{R_1} \frac{du dy}{F(u,y)} +2\int_{R_3}  \frac{du dy}{F(u,y)},\quad\quad
\displaystyle \int_{\Delta_4}\omega=-2\int_{R_1} \frac{du dy}{F(u,y)}.
\end{cases}
\end{align}
\end{thm}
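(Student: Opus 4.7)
The plan is to start from Proposition \ref{DeltaThm}, which already expresses each period $\int_{\Delta_j}\omega_K$ as an integer linear combination of the tube-cycle integrals $\int_{L_k}\omega_K$ for $k=1,\ldots,6$. Thus it suffices to compute $\int_{L_k}\omega_K$ explicitly for each $k$ and assemble the four combinations. Since the Kummer surface is the double cover branched along $P\cup Q$, the holomorphic $2$-form can be written (up to a constant normalisation, absorbed into the overall factor) as $\omega_K=\frac{du\wedge dy}{F(u,y)}$ on $\{\zeta_0\neq 0\}$, with $F$ the branch of the square root fixed in (\ref{alg.F}).

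The second step is to use the tube-cycle structure of each $L_k$: by definition $L_k$ is swept out over the real arc $\varrho_k\subset\mathbb{R}_+$ by the $1$-cycle $\delta^{(k)}(y)\in H_1(\pi^{-1}(y),\mathbb{Z})$ specified in Table 6. This gives the Fubini-type reduction
\begin{equation*}
\int_{L_k}\omega_K=\int_{\varrho_k}\Bigl(\int_{\delta^{(k)}(y)}\frac{du}{F(u,y)}\Bigr)\,dy.
\end{equation*}
Now I would combine this with Table 7, which evaluates the inner elliptic integrals $\tfrac{1}{2}\int_{\gamma_i(y)}\omega_y$ against the real branch points $\alpha(y),\beta(y),p(y),\infty$ on each subinterval $(s_{j-1},s_j)$. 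Because $\delta^{(k)}\in\{\gamma_1,\gamma_2,\gamma_1-\gamma_2\}$, the inner integral is always a signed sum of real definite integrals $\int\! du/F(u,y)$ between two consecutive real values of $\{w_1(y),w_2(y),w_3(y),\infty\}$. After swapping the order of integration in $(y,u)$, every $\int_{L_k}\omega_K$ becomes a signed sum of double integrals of $\frac{du\,dy}{F(u,y)}$ over rectangular-shaped regions of the real $(y,u)$-plane whose vertical boundaries are $y=s_{j-1},s_j$ and whose horizontal boundaries are two of the three functions $\alpha,\beta,p$ (or $\infty$).

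The third step is to read off the answer from the definitions (\ref{R1R2R3R4}) of $R_1,R_2,R_3,R_4$. Inspection of Table 7 and the ranges of the arcs $\varrho_k$ listed in Table 6 shows that all regions outside $R_1\cup R_2\cup R_3\cup R_4$ that a priori could appear must in fact cancel in the four combinations (\ref{intlemma}) of Proposition \ref{DeltaThm}; the surviving contributions recover exactly the regions $R_1,R_2,R_3,R_4$ with integer multiplicities. Carrying out this bookkeeping for each of $\Delta_1,\ldots,\Delta_4$ yields the coefficients $2,2,6,2,-2$ appearing in (\ref{AlgIntegral}); the factor $2$ at the front comes from the factor $\tfrac{1}{2}$ in Table 7 together with the fact that each $L_k$ is a single tube rather than a doubled one.

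The main obstacle I anticipate is purely bookkeeping but not trivial: one must keep careful track of (i) the orientations of the arcs $\varrho_k$ and induced orientations of the tubes $L_k$, (ii) the signs of $F(u,y)$ given in Table 3 on each sub-rectangle, (iii) the precise signed decomposition of $\delta^{(k)}$ into the basis $\gamma_1,\gamma_2$ as $y$ crosses each cut line $l_j$, and (iv) the cancellation of the unwanted real regions across the six $\int_{L_k}$ contributions. Provided these signs are handled consistently with Table 7 and Lemma \ref{LocalMonodoromyLemma}, the asserted formulas (\ref{AlgIntegral}) follow.
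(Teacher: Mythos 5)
Your proposal follows essentially the same route as the paper's own proof: starting from Proposition \ref{DeltaThm}, reducing each tube integral $\int_{L_k}\omega_K$ via the fibration to an iterated integral $\int_{\varrho_k}\bigl(\int_{\delta^{(k)}(y)}du/F(u,y)\bigr)dy$, evaluating the inner elliptic integrals with Table 6 and Table 7, and assembling the signed real regions into the chambers $R_1,\dots,R_4$. The paper simply carries out this bookkeeping explicitly for each $\Delta_j$, so your plan is correct and matches the intended argument.
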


\begin{proof}
Form Proposition (\ref{DeltaThm}), Table 6 and Table 7, 
we have
\begin{align*}
&\displaystyle \int_{\Delta_2} \omega_K = \int_{L_5} \omega_K - \int_{L_4} \omega_K\\
&\quad\quad\quad\displaystyle =2 \int_{s_1}^{s_4} \int_{\gamma_1 (y) -\gamma_2 (y)} \frac{dydu}{F(u,y)} -2\int_{s_2}^{s_3} \int_{\gamma_1 (y)}\omega_K
=2 \int_{s_1}^{s_4} \int_{p(y)}^{\alpha (y)}\frac{dy du }{F(u,y)}=\int_{R_2}\frac{dydu}{F(u,y)}.
\end{align*}
Similarly, we have 
\begin{align*}
\begin{cases}
&\displaystyle \int_{\Delta_1} \omega_K = \int_{L_5} \omega_K -\int_{L_4} \omega_K + \int_{L_2} \omega_K\\
&\quad\quad\quad\displaystyle =2 \int_{R_2} \frac{dydu}{F(u,y)} + 2\int_{s_4}^{s_5}\int_{\alpha(y)}^{p(y)}\frac{dydu}{F(u,y)}
\vspace{2mm}=2\int_{R_2}\frac{dydu}{F(u,y)} +2 \int_{R_4} \frac{dydu}{F(u,y)},\\
&\displaystyle \int_{\Delta_4}=\int_{L_6} \omega_K +\int_{L_5}\omega_K -\int_{L_4}\omega_K -\int_{L_3}\omega_K +\int_{L_2}\omega_K+\int_{L_1}\omega_K\\
&\quad\quad\quad\displaystyle =2 \int_{0}^{s_1}\int_{\beta(y)}^{\alpha(y)}\frac{dydu}{F(u,y)} + 2 \int_{s_1}^{s_2}\int_{p(y)}^{\beta(y)}\frac{dydu}{F(u,y)}
\vspace{2mm}=-\int_{R_1}\frac{dydu}{F(u,y)},\\
&\displaystyle \int_{\Delta_3}=-\int_{L_4}\omega_K - 3\int_{\Delta_4}\omega_K\\
&\quad\quad\quad\displaystyle = 2\int_{s_2}^{s_3}\int_{p(y)}^{\beta(y)}\frac{dydu}{F(u,y)} +6 \int_{R_1}\frac{dydu}{F(u,y)}
=2\int_{R_3} \frac{dydu}{F(u,y)} +6 \int_{R_1}\frac{dydu}{F(u,y)}.
\end{cases}
\end{align*}
\end{proof}

By the analytic continuation of the single-valued branch   on $U_0$ given by the integrals in (\ref{AlgIntegral}), 
we have the multivalued period mapping $\Phi_K$ for the family $\mathcal{K}$. 
Hence, the Hilbert modular functions for $\mathbb{Q}(\sqrt{5})$
is deeply concerned with the arrangement of the divisors $P$ in (\ref{P}) and $Q$ in (\ref{Q}).
The above theorem gives a canonical extension of the classical elliptic integrals to the Hilbert modular case with the smallest discriminant.

\section*{Acknowledgment}
The author would like to thank Professor Hironori Shiga for helpful advises and  valuable suggestions.
He is also grateful to Professor Kimio Ueno and the members of his laboratory for kind encouragements. 
He is grateful to the referee for careful reading and valuable comments. This work is supported by Waseda University Grant for Special Research Project 2013A - 870 and 2014B -169.

\vspace{5mm}

\begin{center}
\hspace{7.7cm}\textit{Atsuhira  Nagano}\\
\hspace{7cm}\textit{ Department of Mathematics}\\
\hspace{7.7cm}\textit{ Waseda University}\\
\hspace{7.7cm}\textit{Okubo 3-4-1, Shinjuku-ku, Tokyo, 169-8555}\\
\hspace{7.7cm}\textit{Japan}\\
 \hspace{7.7cm}\textit{(E-mail: atsuhira.nagano@gmail.com)}
  \end{center}

\begin{thebibliography}{20}


\bibitem{GeemenSarti} B. van Geemen and A. Sarti,
\newblock{\em Nikulin involutions on $ K3$ surfaces,}
\newblock{Math. Z {\bf 255}, 731- 753, 2007.}


\bibitem{Gundlach}K. B. Gundlach,
\newblock{\em Die Bestimmung der Funktionen zur Hirbertschen Modulgruppe des Zahlk\"orpers $\mathbb{Q}(\sqrt{5})$,}
\newblock{Math. Ann. {\bf 152}, 1963, 226-256. }



\bibitem{Humbert} G. Humbert,
\newblock{\em Sur les fonctions ab\'{e}liennes singuli\`{e}res,}
\newblock{Oeuvres de G. Humbert 2, pub. par les soins de Pierre Humbert et de Gaston Julia, Gauthier-Villars, 297-401, 1936.}


\bibitem{Hirzebruch}F. Hirzebruch,
\newblock {\em The ring of Hilbret modular forms for real quadratic fields of small discriminant,}
\newblock {Lecture Notes in Math. {\bf 627}, Springer-Verlag, 1977, 287-323.}



\bibitem{HashimotoMurabayashi} K. Hashimoto and Y. Murabayashi,
\newblock{\em Shimura curves as intersections of Humbert equations and  defining Equations of QM-curves of genus two,}
\newblock{Tohoku Math. J., {\bf 47} (2), 1995, 271-296.}


\bibitem{Kodaira} K. Kodaira,
\newblock{\em On analytic surfaces II,}
\newblock{Ann. of Math., {\bf 77}, 1963, 563-626.}



\bibitem{KobaNaru}R. Kobayashi, K. Kushibiki and I. Naruki,
\newblock {\em Polygons and Hilbert modular groups,}
\newblock {Tohoku Math. {\bf 41}, 1989, 633-646.}


\bibitem{MSY} K. Mastumoto, T. Sasaki and M. Yoshida,
\newblock{\em The monodromy of the period map of a $4$-parameter family of $K3$ surfaces and the hypergeometric function of type $(3,6)$,}
\newblock{Internat. J. Math, {\bf 3}, 1-164, 1992.}




\bibitem{Morrison} D. R. Morrison,
\newblock{\em On $K3$ surfaces with large Picard number,}
\newblock{Invent. Math. {\bf 75}, 105-121, 1984.}


\bibitem{Muller}
R. M\"uller
\newblock{\em Hilbertsche Modulformen und Modulfunctionen zu $\mathbb{Q}(\sqrt{5})$,}
\newblock{Arch. Math. {\bf 45}, 1985, 239-251}


\bibitem{Nagano}
A. Nagano,
\newblock{\em A theta expression of the Hilbert modular functions for  $\sqrt{5}$ via the  periods of $K3$ surfaces,}
\newblock{Kyoto J. Math., {\bf 53} (4), 2013, 815-843.}



\bibitem{NaganoShiga} A. Nagano and H. Shiga,
\newblock{\em Modular map for the family of Kummer surfaces via $K3$ surfaces,}
\newblock{Math. Nachr., to appear, 2014.} 



\end{thebibliography}
 \end{document}